\newcommand{\R}{\mathbb{R}}
\newcommand{\N}{\mathbb{N}}
\newtheorem{theorem}{Theorem}
\newtheorem{lemma}[theorem]{Lemma}
\newtheorem{proposition}[theorem]{Proposition}
\title{Geodesic Normal distribution on the circle}
\author{Jean-François Coeurjolly$^{1,2}$ and Nicolas Le Bihan$^{2,3}$ \\ 
$^1$ Laboratoire Jean Kuntzmann, Grenoble University, France. \\
$^2$ GIPSA-lab, Grenoble University, France, $^3$ CNRS, France.}
\date{\today}
\begin{document}

\maketitle

\begin{center}
\begin{minipage}{12cm}
\centerline{\bf Abstract}
This paper is concerned with the study of a circular random distribution called geodesic Normal distribution recently proposed for general manifolds. This distribution, parameterized by two real numbers associated to some specific location and dispersion concepts, looks like a standard Gaussian on the real line except that the support of this variable is $[0,2\pi)$ and that the Euclidean distance is replaced by the geodesic distance on the circle. Some properties are studied and comparisons with the von~Mises distribution in terms of intrinsic and extrinsic means and variances are provided. Finally, the problem of estimating the parameters through the maximum likelihood method is investigated and illustrated with some simulations. 
\end{minipage}
\end{center}

\bigskip
Circular statistics deal with random variables taking values on hyperspheres and can be included in the broader field of directional statistics. Applications of circular statistics are numerous and can be found, for example, in fields such as climatology (wind direction data \cite{B-MarJup00}), biology (pigeons homing performances \cite{Wat83}) or earth science (earthquake locations occurence and other data types, see~\cite{B-MarJup00} for examples) among others.

A circular distribution is a probability distribution function (pdf) which mass is concentrated on the circumference of a unit circle. The support of a random variable 
$\theta$ representing an angle measured in radians may be taken to $[0,2\pi)$ or $[-\pi,\pi)$. We will focus here on continuous circular distributions,
that is on absolutely continuous (w.r.t. the Lebesgue measure on the circumference) distributions. A pdf of a circular random variable has to fulfill
the following axioms
\begin{itemize}
\item[(i)] $f(\theta)\geq 0$.
\item[(ii)] $\int_0^{2\pi} f(\theta)d\theta =1$
\item[(iii)] $f(\theta)=f(\theta +2 k \pi)$ for any integer $k$ (i.e. $f$ is periodic).
\end{itemize}

Among many models of circular data, the von~Mises distribution plays a central role (essentially due to the similarities shared with the Normal 
distribution on the real line). A circular random variable (for short r.v.) $\theta$ is said to have a von~Mises distribution, denoted by $vM(\mu,\kappa)$, if it has the density function
$$
f(\theta; \mu,\kappa) = \frac1{2 \pi I_0(\kappa)} e^{\kappa \cos(\theta-\mu)},
$$
where $\mu \in [0,2\pi)$ and $\kappa\geq 0$ are parameters and where $I_0(\kappa)$ is the modified Bessel function of order~0. The aim of this paper is to review some properties of another circular 
distribution introduced by \cite{A-Pen06} which also shares similarities with the Normal distribution on the real line. For some $\mu\in[0,2\pi)$ and for some parameter $\gamma\geq 0$, a r.v. $\theta$ is said to have a geodesic Normal distribution denoted in the following $gN(\mu,\gamma)$, if it has the density function
$$
f(\theta;\mu,\gamma) = k^{-1}(\gamma) e^{-\frac{\gamma}2 d_G(\mu,\theta)^2},
$$
where $d_G(\cdot,\cdot)$ is the geodesic distance on the circle and where $k(\gamma)$ is the normalizing constant defined by
$$k(\gamma):=\sqrt{\frac{2\pi}\gamma} \left( \Phi(\pi\sqrt{\gamma})-\Phi(-\pi\sqrt{\gamma}\right) = \sqrt{\frac{2\pi}\gamma} erf\left(\pi \sqrt{\frac\gamma 2} \right),$$ 
where $\Phi$ is the cumulative distribution function of a standard Gaussian random variable and where $erf(\cdot)$ is the error function. \\
Let us underline that Pennec \cite{A-Pen06} introduced the geodesic Normal distribution for general Riemannian manifolds. We focus here on a special (and simple) manifold, the circle, in order to highlight its basic properties and compare them with the most classical circular distribution, namely the $vM(\mu,\kappa)$ distribution. That is, we present here a new study of the $gN$ distribution in the framework of circular statistics and provide results in terms of estimation and asymptotic behaviour. 

One of the main conclusions of this paper may be summarized as follows. While the von~Mises distribution has strong relations with the notion of extrinsic moments (that is with trigonometric moments), we will emphasize, in this paper, that the geodesic Normal distribution has strong relations with intrinsic moments that is with the Fréchet mean (defined as the angle $\alpha$ minimizing the expectation of $d_G(\alpha,\theta)^2$) and the geodesic variance. $vM$ and $gN$ distributions definition are closely related to respectively extrinsic and intrinsic moments, and we present their similarities together with dissimilarities.\\

After introducing the $gN$ distribution in Section \ref{sec-geoNorm}, we present a brief review on intrinsic and extrinsic quantities that allow characterization of distributions on the circle in Section \ref{sec-mes}. In Section \ref{sec-prop}, we present extrinsic and intrinsic properties of $vM$ and $gN$ distributions. Then, in Section \ref{sec-simu}, we rapidly explain how to simulate $gN$ random variables on the circle. Finally, in Section \ref{sec-mv}, we present the maximum likelihood estimators for the $gN$ distributions and study their asymptotic behaviour. Numerical simulations illustrate the presented results. 

\section{Geodesic Normal distribution through the tangent space}
\label{sec-geoNorm}
As introduced in \cite{A-Pen06} the geodesic Normal distribution is defined for random variables taking values on Riemannian manifolds and is based on the ``geodesic distance'' concept and on the total intrinsic variance \cite{A-BhaPat03,A-BhaPat05,A-Pen06}. On a Riemannian manifold ${\cal M}$, one can define at each point $x \in {\cal M}$ a scalar product $\left< .,. \right>_{x}$ in the tangent plane $T_x{\cal M}$ attached to the manifold at $x$. 
On ${\cal M}$, among the possible smooth curves between two points $x$ and $y$, the curve of minimum length is called a {\em geodesic}. The length of the curve is understood as integration of the norm of its instantaneous velocity along the path, and with the norm at position $x$ on ${\cal M}$ taken as:  $\left\|.\right\|=\sqrt{\left< .,. \right>_{x}}$. 
It is well-known that, given a point $x \in {\cal M}$ and a vector $\overrightarrow{v} \in T_x{\cal M}$, there exists only one geodesic $\gamma(t)$ with $\gamma(t=0)=x$ and with tangent vector $\overrightarrow{v}$.   

Through the exponential map, each vector $\in T_x{\cal M}$ is associated to a point $y \in {\cal M}$ reached in unit time,  {\em i.e.} $\gamma(1)=y$. Using the notation adopted in \cite{A-Pen06}, the vector defined in $T_x{\cal M}$ associated to the geodesic that starts from $x$ at time $t=0$ and reaches $y$ at time $t=1$ is denoted $\overrightarrow{xy}$. Thus, the exponential map (at point $x$) maps a vector of $T_xM$ to a point $y$, {\em i.e.} 
$y=\exp_x(\overrightarrow{xy})$.
Now the {\em geodesic distance}, denoted $d_{G}(x,y)$, between $x \in {\cal M}$ and $y \in {\cal M}$ is:
$$d_{G}(x,y)=\sqrt{\left<\overrightarrow{xy},\overrightarrow{xy}\right>_x}$$
The Log map is the inverse map that associates to a point $y \in {\cal M}$ in the neighbourhood of $x$ a vector $\overrightarrow{xy} \in T_x{\cal M}$, {\em i.e.} $\overrightarrow{xy}=Log_x(y)$.

%Consider a manifold ${\cal M}$ of dimension N (recall that this is the intrinsic dimension of ${\cal M}$, that ${\cal M}$ can be embedded in ${\mathbb R}^{N+1}$ and that the tangent plane at $x$ is $N$ dimensional, which means that $\overrightarrow{xy} \in T_x{\cal M}$ is a $N$-dimensional vector $\forall y$). 
A random variable $Y$ taking values in ${\cal M}$, with density function $f(y;\mu,\Gamma)$ is said to have a geodesic Normal distribution with parameters $\mu \in {\cal M}$ and $\Gamma$, a $(N,N)$ matrix, denoted by $gN(\mu,\Gamma)$, if:
$$
f(y;\mu,\Gamma)=k^{-1}\exp\left(-\frac{\overrightarrow{\mu y}^T.\Gamma.\overrightarrow{\mu y}}{2}\right).
$$ 
The parameter $\mu$ is related to some specific location concept. Namely, \cite{A-Pen06} has proved that $\mu$ corresponds to the intrinsic or Fréchet mean of the random variable $Y$ (see Section \ref{sec-mes} for more details).
%as $\mbox{argmin}_{\tilde\mu \in \mathcal{M}} E[ d_G(\tilde\mu,Y)^2]$.  
The normalizing constant is given by:
$$
k=\int_{\cal M} \exp\left(-\frac{\overrightarrow{\mu y}^T.\Gamma.\overrightarrow{\mu y}}{2}\right) d{\cal M}(y)
$$
where $d{\cal M}(y)$ is the Riemannian measure (induced by the Riemannian metric). 
The matrix $\Gamma$ is called the concentration matrix and is related to the covariance matrix of the vector $\overrightarrow{\mu Y}$ given for random variables on a Riemannian manifold by
$$
\Sigma:=E[ \overrightarrow{\mu Y} \overrightarrow{\mu Y}^T]  =k\int_{\cal M} \overrightarrow{\mu y}.\overrightarrow{\mu y}^T\exp\left(-\frac{\overrightarrow{\mu y}^T.\Gamma.\overrightarrow{\mu y}}{2}\right) d{\cal M}(y).
$$
Note that in the case where ${\cal M}=\mathbb{R}^d$, the manifold is flat and the geodesic distance is nothing more than the Euclidian distance. In this case, 
we retrieve the classical definition of a Gaussian variable in $\R^d$ with $\mu$ and $\Gamma$ corresponding respectively to  the classical expectation and to the inverse of the covariance matrix $\Sigma^{-1}$.

\paragraph{The case of the circle} 
We now present, as done in \cite{A-Pen06}, the case where ${\cal M}$ is the unit circle ${\cal S}^1$. The exponential chart is the angle $\theta \in ]-\pi;\pi[$. Note that this chart is "local" and is thus defined at a point on the manifold. This must be kept in mind especially for explicit calculation. Here, as the tangent plane to ${\cal S}^1$ is the real line, $\theta$ takes values on the segment between $-\pi$ and $\pi$. Note that $\pi$ and $-\pi$ are omitted as they are in the cut locus of the "development point" (point on the manifold where the tangent plane is attached). $d{\cal M}$ is simply $d\theta$ here.

As stated before, the $gN$ distribution on the circle has density function  given for $\theta\in (\mu-\pi,\mu+\pi)$ by:
$$
f(\theta;\mu,\gamma) = k^{-1}(\gamma) e^{-\frac{\gamma}2 d_G(\mu,\theta)^2},
$$
where $\gamma$ is a nonnegative real number. Note that $d_G(\mu,\theta)$ is the arc length between $\mu$ and $\theta$. The normalization $k(\gamma)$ is:
$$k(\gamma)= \int_{\mu-\pi}^{\mu+\pi} e^{-\frac{\gamma}2 d_G(\mu,\theta)^2} d\theta = \sqrt{\frac{2\pi}\gamma} erf\left(\pi \sqrt{\frac\gamma 2} \right),$$ 
with the development made around $\mu$.

In order to consider a $gN$ distribution as a classical circular distribution (defined by axioms (i)-(iii) in the introduction), one must extend the support of this distribution from $(\mu-\pi,\mu+\pi)$ to $\R$. The way to achieve this is to make the geodesic distance periodic. Let us consider the distance $\tilde d_G$ for an angle $\alpha \in\R\setminus\{\mu +k\pi,k\in\mathbb{Z}\}$ defined by $\tilde d_G (\mu,\alpha)=d_G(\mu, \tilde\alpha)$ with $\tilde\alpha=(\alpha-\mu+\pi) (mod \; 2\pi)$. Let $\tilde f$ the density $f$ where $d_G$ is replaced by $\tilde{d}_G$. This new density defines a circular disribution satisfying axioms (i)-(iii). In particular,
$$
\int_0^{2\pi} \tilde{f} (\theta,\mu,\gamma)d\theta = \int_0^{2\pi} k^{-1}(\gamma)e^{-\frac{\gamma}2 \tilde d_G(\mu,\theta)^2}d\theta= \int_{\mu-\pi}^{\mu+\pi} k^{-1}(\gamma)e^{-\frac{\gamma}2  d_G(\mu,\theta)^2}d\theta=1.
$$
For the sake of simplicity, $d_G$ will be understood as the distance $\tilde d_G$ in the rest of the paper. And therefore, the density of a $gN$ distribution is considered periodic, defined in $\R\setminus\{\mu +k\pi,k\in\mathbb{Z}\}$ and with values on $\R$. In the case of a $vM$ distribution (and actually for most of circular distributions) no such considerations are needed; the periodic nature being included through the $\cos(\cdot)$ function.

\section{Classical measures of location and dispersion for circular random variables}\label{sec-mes}

We briefly present the concepts of {\em extrinsic} and {\em intrinsic} moments for random variables on the circle. While the former are well-known in circular statistics ({\it e.g.} \cite{B-MarJup00}), the later, based on the geodesic distance on the circle, are less used in this domain. They have been introduced and commonly used when dealing with genereal Riemannian manifolds ({\it e.g.} \cite{A-Kar77,A-Zie77,A-BhaPat03} and the numerous references therein).

\subsubsection*{Extrinsic moments}

In circular statistics, it is well established that trigonometric moments give access to measures of {\em mean direction} and {\em circular variances}. Considering a cicular random variable $\theta$, its $p-{th}$ order trigonometric moment is defined as $\varphi_p=E[e^{ip\theta}]=\alpha_p + i \beta_p$ where $\alpha_p=E[\cos p\theta]$ and $\beta_p=E[\sin p\theta]$. These later quantities are {\em extrinsic} by definition. The first order trigonometric moment is thus $\varphi_1=E[e^{i\theta}]=\rho e^{i \mu^E}$
where $\rho$ is called the {\em mean resultant length} ($0\leq \rho \leq 1$) and $\mu^E$ is the {\em mean direction}. In the following, we refer to $\mu^E $ as the {\em extrinsic mean}. The {\em extrinsic variance} $\sigma^2_E$ is indeed the circular variance defined as $\sigma^2_E = 1 - \rho$. In the sequel, {\em extrinsic moments} will be used in place of {\it trigonometric moments}, keeping in mind that they are the same quantities. For more details on trigonometric moments see \cite{B-MarJup00,B-JamSen01}.

\subsubsection*{Intrinsic moments}
Another way to consider moments for distributions of random variables on the circle is to use the fact that ${\cal S}^1$ is a Riemannian manifold and thus the geodesic distance can be used to define {\em intrinsic moments}. Given a random variable $\theta$ with values on ${\cal S}^1$, we define by $\mu^ I=\mbox{argmin}_{\tilde\mu \in \mathcal{S}^1} E[ d_G(\tilde\mu,\theta)^2]$,
the intrinsic mean set (a particular case of a Fréchet mean set), where we recall that $d_G(.,.)$ is the geodesic distance on the circle, {\em i.e.} the arc length. When the intrinsic mean set is reduced to a single element, $\mu^I$ is simply called the intrinsic mean.
The {\em intrinsic variance} $\sigma^2_I$ is then uniquely defined by $\sigma^ 2_I=E\left[d_G(\mu^I,\theta)^2\right]$ where $\mu^I$ is the intrinsic mean (set) defined above.
% with $\mu^I$ the {\em intrinsic mean} defined above. %Note that, once again, this definition is consistent with the ``classical'' definition of the variance of a random variable on the real, with the difference that the natural distance to use on the circle is the geodesic distance. 
For more details and a thourough study of intrinsic statistics for random variables on Riemannian manifolds, see \cite{A-Pen06} or \cite{A-BhaPat03,A-BhaPat05}. Other concepts of intrinsic variance (e.g. variances obtained by residuals and by projection) exist (see \cite{A-Huc01} for thourough description). In this paper, we only focus on the intrinsic mean and (total) variance which are sufficient to basically compare $gN$ and $vM$ distributions.

\section{Basic properties of the $gN(\mu,\gamma)$ and $vM(\mu,\kappa)$ distributions}
\label{sec-prop}
\subsubsection*{Symmetry property}

First, let us say that like the $vM$ distribution, the $gN$ distribution has a mode for $\theta=\mu$, and a symmetry around $\theta=\mu$. The $vM$ distribution has an anti-mode for $\theta=\mu \pm \pi$. For a $gN$ distribution, the density is not defined at these points. However, the shared behaviour is the decreasing of both densities on each interval $(\mu-\pi,\mu]$ and $[\mu,\mu+\pi)$.

\subsubsection*{Extrinsic and intrinsic means and variances}

Table~\ref{sum-mes} summarizes extrinsic and intrinsic means and variances for both distributions of interest. Let us make some comments.
In~\cite{A-KazSri08}, the authors follow the works of Le (\cite{A-Le98,A-Le01}) and give very simple conditions on the density of a circular random variable that ensure the existence and unicity of the intrinsic mean. It is left to the reader that applying Theorem~1 of \cite{A-KazSri08} allows us to assert that the intrinsic mean of a $gN(\mu,\gamma)$ or $vM(\mu,\kappa)$ is $\mu$. Furthermore, the computation of $\sigma^2_I$ for a $gN$ distribution (resp. $\sigma^2_E$ for a $vM$ distribution) can be found in \cite{A-Pen06} (resp. {\it e.g.} \cite{B-MarJup00}). The intrinsic variance $\sigma_I^2$ for a $vM$ is quite obvious and is omitted. It remains to explain how we obtain the extrinsic moments for a $gN$ distribution. Both are indeed derived through the $p$th trigonometric moment of a $gN$ distribution reported in the following proposition.

\begin{proposition}\label{prop-TM}
The $p$-th trigonometric moment ($p \in \N^*$) of a $gN(\mu,\gamma)$ distribution, denoted by $\varphi_p$ and defined by $\varphi_p := E\left[ e^{ip\theta} \right]$ is given by
$$
\varphi_p  = e^{ip\mu} \;e^{-\frac{p^2}{2\gamma}}\; \frac{ Re \left( erf\left( \pi \sqrt{\frac\gamma 2}- i \frac{p}{\sqrt{2\gamma}} \right)\right)}{erf\left( \pi \sqrt{\frac\gamma 2}\right)},
$$
where $erf$ is the error function defined for any complex number by $erf(z)=\frac{2}{\sqrt{\pi}} \int_{-\infty}^z e^{-t^2} dt$.
\end{proposition}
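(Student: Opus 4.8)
The plan is to compute the defining integral directly, turning the trigonometric moment into a single Gaussian integral over the exponential chart and then recognizing it as a difference of error functions. First I would observe that, because $e^{ip\theta}$ is $2\pi$-periodic, the same reduction already carried out in Section~\ref{sec-geoNorm} (to check that $\tilde f$ integrates to one) applies verbatim: integrating $e^{ip\theta}\tilde f$ over $[0,2\pi)$ against the periodized distance coincides with integrating over the fundamental chart $(\mu-\pi,\mu+\pi)$ with the genuine geodesic distance. On that interval $d_G(\mu,\theta)=|\theta-\mu|$, so $d_G(\mu,\theta)^2=(\theta-\mu)^2$, and the substitution $u=\theta-\mu$ factors out the location parameter exactly as claimed:
\[
\varphi_p = e^{ip\mu}\,k^{-1}(\gamma)\int_{-\pi}^{\pi} e^{ipu}\,e^{-\frac{\gamma}{2}u^2}\,du .
\]
This isolates the announced prefactor $e^{ip\mu}$ and reduces the problem to the single integral $J:=\int_{-\pi}^{\pi} e^{ipu-\frac{\gamma}{2}u^2}\,du$.

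Next I would complete the square in the exponent, $ipu-\frac{\gamma}{2}u^2=-\frac{\gamma}{2}\bigl(u-\tfrac{ip}{\gamma}\bigr)^2-\frac{p^2}{2\gamma}$, which pulls out precisely the Gaussian factor $e^{-p^2/(2\gamma)}$ predicted by the statement. The change of variable $t=\sqrt{\gamma/2}\,(u-ip/\gamma)$ then rewrites $J$ as a Gaussian integral along a horizontal segment $\mathcal C$ in the complex plane, namely
\[
J = e^{-\frac{p^2}{2\gamma}}\sqrt{\tfrac{2}{\gamma}}\int_{\mathcal C} e^{-t^2}\,dt,\qquad \mathcal C:\ a:=-\pi\sqrt{\tfrac{\gamma}{2}}-\tfrac{ip}{\sqrt{2\gamma}}\ \longrightarrow\ z:=\pi\sqrt{\tfrac{\gamma}{2}}-\tfrac{ip}{\sqrt{2\gamma}} .
\]

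Finally I would evaluate this contour integral through the error function. Since $e^{-t^2}$ is entire, $erf$ is an entire antiderivative of $\tfrac{2}{\sqrt\pi}e^{-t^2}$, so $\int_{\mathcal C}e^{-t^2}\,dt=\tfrac{\sqrt\pi}{2}\bigl(erf(z)-erf(a)\bigr)$ independently of the path. The crucial simplification is that $a=-\bar z$; invoking the oddness $erf(-w)=-erf(w)$ and the reflection identity $erf(\bar w)=\overline{erf(w)}$ (both immediate from the real power-series coefficients of $erf$) gives $erf(z)-erf(a)=erf(z)+erf(\bar z)=erf(z)+\overline{erf(z)}=2\,Re\,erf(z)$. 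Substituting back yields $J=e^{-p^2/(2\gamma)}\sqrt{2\pi/\gamma}\,Re\,erf(z)$, and dividing by $k(\gamma)=\sqrt{2\pi/\gamma}\,erf(\pi\sqrt{\gamma/2})$ cancels the $\sqrt{2\pi/\gamma}$ factor and produces exactly the stated formula.

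The only genuinely delicate point is the passage to a complex argument: completing the square shifts the contour off the real axis, and one must justify that $\int_{\mathcal C}e^{-t^2}\,dt$ is still given by the difference of $erf$-values at the complex endpoints. The clean way is to lean on the analyticity of $e^{-t^2}$ — equivalently, a contour-deformation argument closing the rectangle between the real segment $[-\pi\sqrt{\gamma/2},\pi\sqrt{\gamma/2}]$ and $\mathcal C$, whose vertical sides furnish only the imaginary part that the final $Re$ discards — together with the symmetry reduction to the real part; the completion of the square and the bookkeeping of the normalizing constant are then routine.
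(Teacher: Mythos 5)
Your proof is correct and follows essentially the same route as the paper's: reduce to the chart $(\mu-\pi,\mu+\pi)$ where $d_G(\mu,\theta)=|\theta-\mu|$, factor out $e^{ip\mu}$, complete the square, evaluate the resulting Gaussian integral as a difference of $erf$ values at the complex endpoints $\pm\pi\sqrt{\gamma/2}-ip/\sqrt{2\gamma}$, and conclude via $erf(z)-erf(-\bar z)=2\,Re\,erf(z)$. The only difference is that you explicitly justify the off-axis contour evaluation by analyticity of $e^{-t^2}$, a step the paper's proof uses implicitly without comment.
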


\begin{proof}
Let $p\geq 1$, 
$$
E[ e^{ip\theta}] = k^{-1}(\gamma) \int_0^{2\pi} e^{ip\theta} e^{-\frac\gamma 2 d_G(\theta,\mu)^2} d\theta = k^{-1}(\gamma) \int_{\mu-\pi}^{\mu+\pi} e^{ip\theta} e^{-\frac\gamma 2 d_G(\theta,\mu)^2} d\theta.
$$
Since for $\theta\in (\mu-\pi,\mu+\pi)$, $d_G(\theta,\mu)=|\theta-\mu|$, 
\begin{eqnarray*}
E[ e^{ip\theta}] &=& k^{-1}(\gamma) \int_{-\pi}^\pi e^{ip(\theta+\mu)} e^{-\frac{\gamma}2 \theta^2} d\theta  \\
&=& e^{ip \mu} k^{-1}(\gamma) \int_{-\pi}^\pi e^{- \left(  \left(\theta\sqrt{\frac{\gamma}2} - i \frac p{\sqrt{2\gamma}}\right)^2 - \left( i \frac p{\sqrt{2\gamma}}\right)^2 \right)}d\theta  \\
&=& e^{ip\mu} e^{-\frac{p^2}{2\gamma}} k^{-1}(\gamma) \sqrt{\frac{2\pi}\gamma } \frac{1}2 \left( erf\left( \pi\sqrt{\frac{\gamma}2}- i \frac{p}{\sqrt{2\gamma}} \right) - erf\left( -\pi\sqrt{\frac{\gamma}2}- i \frac{p}{\sqrt{2\gamma}} \right) \right) \\
&=& e^{ip\mu} \; e^{-\frac{p^2}{2\gamma}}\; \frac{ Re \left( erf\left( \pi \sqrt{\frac\gamma 2}- i \frac{p}{\sqrt{2\gamma}} \right)\right)}{erf\left( \pi \sqrt{\frac\gamma 2}\right)},
\end{eqnarray*}
since for any complex number $z$, $erf(z)-erf(-\overline{z})=erf(z)-\overline{erf(-z)}=erf(z)+\overline{erf(z)}=2 Re(erf(z))$.
\end{proof}

\begin{table}[H]
\begin{center}\begin{tabular}{lcccc}
\hline
Distribution& $\mu^I$ & $\mu^E$& $\sigma_I^2$ & $\sigma_E^2$ \\
\hline
$vM(\mu,\kappa)$  & $\mu$ &$\mu$ & $\frac{1}{2\pi I_0(\kappa)} \int_{-\pi}^\pi \alpha^2 e^{\kappa \cos(\alpha)}d\alpha$ & $1-\frac{I_1(\kappa)}{I_0(\kappa)}$ \\
$gN(\mu,\gamma)$& $\mu$ &$\mu$ &$\frac1\gamma\left( 1- 2\pi k^{-1}(\gamma) e^{-\frac{\gamma \pi^2}2} \right)$  & $1-e^{-\frac{1}{2\gamma}}\frac{ Re \left( erf\left( \pi \sqrt{\frac\gamma 2}- i \frac{1}{\sqrt{2\gamma}} \right)\right)}{erf\left( \pi \sqrt{\frac\gamma 2}\right)}$ \\
\hline
\end{tabular}
\end{center}
\caption{Summary of extrinsic and intrinsic means and variances for the von~Mises and geodesic Normal distributions. The function $I_p(\cdot)$ denotes the modified Bessel function of the first kind of order $p$.}\label{sum-mes}
\end{table}

Figure~\ref{fig-conc} shows the evolutions of the extrinsic and intrinsic variances in terms of the concentration parameter $\kappa$ for the $vM$ and $\gamma$ for the gN. It is interesting to notice that the von~Mises distribution and the geodesic Normal distributions have intrinsic variance equal to $\frac{\pi^2}3$ when $\gamma$ or $\kappa$ equals zero, corresponding to the variance of the uniform distribution on the circle. Note also that intrinsic and extrinsic variances tend to zero as $\gamma$ or $\kappa$ tend to infinity.

\begin{figure}[H]
\centerline{\includegraphics[scale=.6]{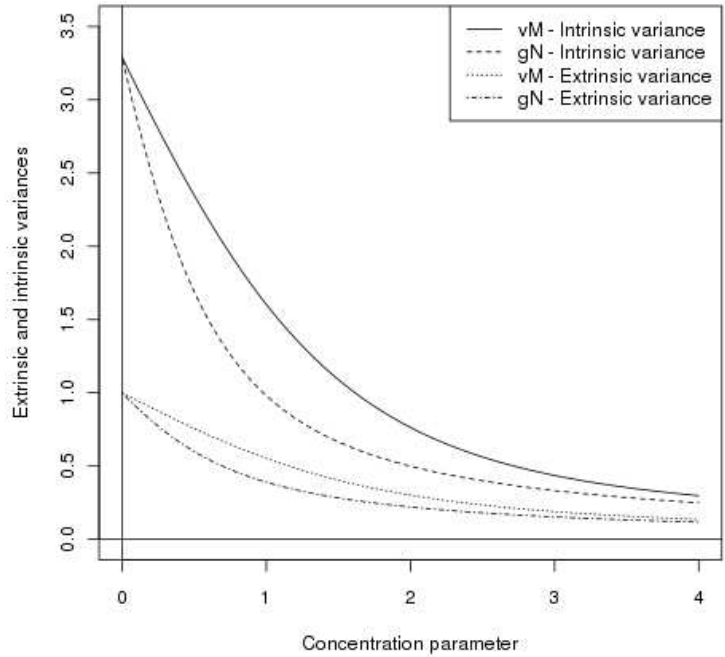}}
\label{fig-conc}
\caption{Evolutions of extrinsic and intrinsic variances in terms of the concentration parameter, $\gamma$ for the $gN$ distribution and $\kappa$ for the $vM$ one.}
\end{figure}

\subsubsection*{Entropy property} \label{entropy}

$gN$ and $vM$ have both a peculiar position respectively amongst distributions defined on Riemannian manifolds and circular statistics distributions: both maximize a certain definition of the entropy. As explained in \cite{B-JamSen01} (characterization due to Mardia \cite{B-Mar72}),  the circular distribution that maximizes the entropy defined with respect to the angular random variable $\theta$ (see \cite{B-MarJup00} for exact definition), subject to the constraint that the first trigonometric moment is fixed, {\em i.e.} for $\mu^E$ and $\sigma^2_E$ fixed, is a vM($\mu^E$,$\kappa$), where $\sigma_E^2=1-I_1(\kappa)/I_0(\kappa)$. In a similar way, as demonstrated in \cite{A-Pen06}, the distribution defined using the geodesic distance on ${\cal S}^1$ which maximizes the entropy, when it is defined in the tangent plane, and subject to the constraints that $\mu^I$ and $\sigma^2_I$ are fixed, is the $gN(\mu^I$,$\sigma^2_I)$. One can thus conclude that $vM$ and $gN$ distributions play ``similar'' roles in that they maximize the entropy with respect to either extrinsic or intrinsic moments of the distribution.

%\begin{proposition}[Pennec / Mardia-Jupp] \label{prop-entropy}
%Décrire et commenter le fait que $vM$ et $gN$ sont des lois issues de max d'entropie. Selon un certain sens Pennec, ou Mardia/Jupp.
%\end{proposition}

\subsubsection*{Linear approximation of  $\overline{\mu\theta}$}

Recall that the random variable $\overline{\mu\theta}$ represents the algebraic measure of the vector $\overrightarrow{\mu \theta}$. The support of $\overline{\mu\theta}$ is $(-\pi,\pi)$. Its cdf is given for $t \in (-\pi,\pi)$ by
$$
F_{\overline{\mu\theta}}(t) = k^{-1}(\gamma)\int_{\mathcal{S}^1} \mathbf{1}_{[-\pi,t]}(\overline{\mu y}) e^{-\frac\gamma 2 \overline{\mu y}^2} d\mathcal{M}(y) = k^{-1}(\gamma) \int_{-\pi}^t e^{-\frac{\gamma}2 \theta^2} d\theta.
$$
This reduces to $F_{\overline{\mu\theta}}(t) = \frac{ \Phi\left( t \sqrt{\gamma} \right)  - \Phi\left( -\pi \sqrt{\gamma} \right)}{\Phi\left( \pi \sqrt{\gamma} \right)  - \Phi\left( -\pi \sqrt{\gamma} \right)}$. In other words, $\overline{\mu\theta}$ is nothing else than a truncated Gaussian random variable with support $(-\pi,\pi)$ with mean 0 and scale parameter $1/\sqrt{\gamma}$, that is
\begin{equation}\label{eq-trunc}
\overline{\mu \theta} \; \stackrel{d}{=} \; Z \big | \; |Z|\leq \pi, \mbox{ where } Z \sim \mathcal{N}(0, 1/\sqrt{\gamma}).
\end{equation}

For large concentration parameters $\gamma$, the geodesic Normal distribution can be ``approximated'' by a linear normal distribution in the following sense.

\begin{proposition}
Let $\theta \sim gN(\mu,\gamma)$, then as $\gamma\to +\infty$, $\sqrt{\gamma} \; \overline{\mu\theta} \stackrel{d}{\to} \mathcal{N}(0,1)$.
\end{proposition}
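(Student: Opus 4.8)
The plan is to work directly with the explicit cumulative distribution function of $\overline{\mu\theta}$ established just above in equation~\eqref{eq-trunc}, rather than going through characteristic functions or moment asymptotics. Recall that convergence in distribution is equivalent to pointwise convergence of the cdf at every continuity point of the limiting law, and that the standard Gaussian cdf $\Phi$ is continuous on all of $\R$. Hence it suffices to show that the cdf of $W:=\sqrt{\gamma}\,\overline{\mu\theta}$ converges to $\Phi(w)$ for each fixed $w\in\R$ as $\gamma\to+\infty$.

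First I would write $P(W\leq w)=P\big(\overline{\mu\theta}\leq w/\sqrt{\gamma}\big)$. For any fixed $w$ the argument $w/\sqrt{\gamma}$ tends to $0$, so it lies in the support $(-\pi,\pi)$ once $\gamma$ is large enough, and I may substitute $t=w/\sqrt{\gamma}$ into the closed form $F_{\overline{\mu\theta}}(t)=\frac{\Phi(t\sqrt{\gamma})-\Phi(-\pi\sqrt{\gamma})}{\Phi(\pi\sqrt{\gamma})-\Phi(-\pi\sqrt{\gamma})}$ derived above. The key simplification is that the scaling cancels exactly in the leading term, since $\Phi\big((w/\sqrt{\gamma})\sqrt{\gamma}\big)=\Phi(w)$, giving $P(W\leq w)=\frac{\Phi(w)-\Phi(-\pi\sqrt{\gamma})}{\Phi(\pi\sqrt{\gamma})-\Phi(-\pi\sqrt{\gamma})}$.

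Then I would pass to the limit $\gamma\to+\infty$. Because $\pi\sqrt{\gamma}\to+\infty$, the Gaussian tail estimates $\Phi(-\pi\sqrt{\gamma})\to 0$ and $\Phi(\pi\sqrt{\gamma})\to 1$ force the denominator to tend to $1$ and the subtracted term to vanish, so $P(W\leq w)\to\Phi(w)$. As this holds for every $w\in\R$ and $\Phi$ is everywhere continuous, the convergence $\sqrt{\gamma}\,\overline{\mu\theta}\stackrel{d}{\to}\mathcal{N}(0,1)$ follows.

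I do not expect a genuine obstacle here: the truncated-Gaussian representation in~\eqref{eq-trunc} has already done the substantive work, and the only mild care required is the observation that for each fixed $w$ the argument $w/\sqrt{\gamma}$ eventually enters the interval $(-\pi,\pi)$, so the closed form of the cdf applies for all $\gamma$ beyond some threshold; the transient behaviour at small $\gamma$ is irrelevant to the limit. As an alternative one could instead feed $p=s\sqrt{\gamma}$ into the trigonometric-moment formula of Proposition~\ref{prop-TM} and recover the limiting characteristic function $e^{-s^2/2}$, but that route is more laborious and the direct cdf computation is cleaner.
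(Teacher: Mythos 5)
Your proof is correct, and it takes a genuinely different route from the paper. The paper works with the moment generating function of $\sqrt{\gamma}\,\overline{\mu\theta}$: it completes the square inside the integral $k^{-1}(\gamma)\int_{-\pi}^{\pi}e^{\sqrt{\gamma}t\theta}e^{-\gamma\theta^2/2}\,d\theta$ to obtain
$h(t)=e^{t^2/2}\,\frac{\Phi(\pi\sqrt{\gamma}-t)-\Phi(-\pi\sqrt{\gamma}-t)}{\Phi(\pi\sqrt{\gamma})-\Phi(-\pi\sqrt{\gamma})}$,
and then lets $\gamma\to+\infty$ to recover $e^{t^2/2}$, invoking the continuity theorem for MGFs to conclude. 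You instead bypass any transform: you plug $t=w/\sqrt{\gamma}$ into the closed-form cdf $F_{\overline{\mu\theta}}(t)=\frac{\Phi(t\sqrt{\gamma})-\Phi(-\pi\sqrt{\gamma})}{\Phi(\pi\sqrt{\gamma})-\Phi(-\pi\sqrt{\gamma})}$ already displayed before \eqref{eq-trunc}, note the exact cancellation $\Phi\bigl((w/\sqrt{\gamma})\sqrt{\gamma}\bigr)=\Phi(w)$, and send the boundary terms to their limits. Your route is the more elementary one: it uses only the definition of convergence in distribution (pointwise cdf convergence at continuity points of the limit, and $\Phi$ is continuous everywhere), requires no integration beyond what the paper has already done, and needs no appeal to a theorem linking MGF convergence to weak convergence. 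The paper's MGF computation is slightly more laborious but is self-contained relative to the density (it does not presuppose the cdf formula) and delivers, as a by-product, convergence of the Laplace transform, hence of all moments of the truncated variable. You were also right to flag the one point of care in your argument, namely that $w/\sqrt{\gamma}$ must eventually fall inside the support $(-\pi,\pi)$ for the closed form to apply; since this holds for all $\gamma>w^2/\pi^2$, the argument is complete.
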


\begin{proof}
Let us denote by $h (\cdot)$ the moment generating function of the random variable $\sqrt{\gamma} \; \overline{\mu\theta}$, then for $t\in \mathbb{R}$
\begin{eqnarray*}
h (t) &=& k^{-1}(\gamma) \int_{-\pi}^\pi e^{\sqrt{\gamma}t \theta} e^{-\frac\gamma 2 \theta^2}d\theta  \\
&=& e^{\frac{t^2}2} \; k^{-1}(\gamma) \sqrt{\gamma} \int_{-\pi\sqrt{\gamma}}^{\pi\sqrt{\gamma}} e^{-\frac12 (\theta-t)^2}d\theta \\
&=& e^{\frac{t^2}2} \; \frac{ \Phi\left( \pi \sqrt{\gamma}-t \right)  - \Phi\left( -\pi \sqrt{\gamma}-t \right)}{\Phi\left( \pi \sqrt{\gamma} \right)  - \Phi\left( -\pi \sqrt{\gamma} \right)}.
\end{eqnarray*}
Therefore, as $\gamma \to +\infty$, for fixed $t$, $h(t)$ converges towards $e^{t^2/2}$ which is the moment generating function of a standard Gaussian random variable.
\end{proof}

Such a result also holds for the von~Mises distribution: as $\kappa\to +\infty$, $\sqrt\kappa (\theta-\mu)\stackrel{d}{\to}\mathcal{N}(0,1)$, see {\it e.g.} Proposition~2.2 in \cite{B-JamSen01}.

\section{Simulation of a geodesic Normal distribution and examples}
\label{sec-simu}
The generation of a $gN$ distribution with support on $(0,2\pi)$ is extremely simple following (\ref{eq-trunc}). It consists in two steps.
\begin{enumerate}
\item Generate $Z \big | \; |Z|\leq \pi, \mbox{ where } Z \sim \mathcal{N}(0, 1/\sqrt{\gamma})$
\item Set $\theta= \mu+ Z (\mbox{mod } 2\pi)$.
\end{enumerate}

Figure~\ref{fig-exgN} presents some examples for different values of location parameter $\mu$ and concentration parameter $\gamma$.

\begin{figure}[htbp]
\begin{center}
\begin{tabular}{ll}
\includegraphics[scale=.37]{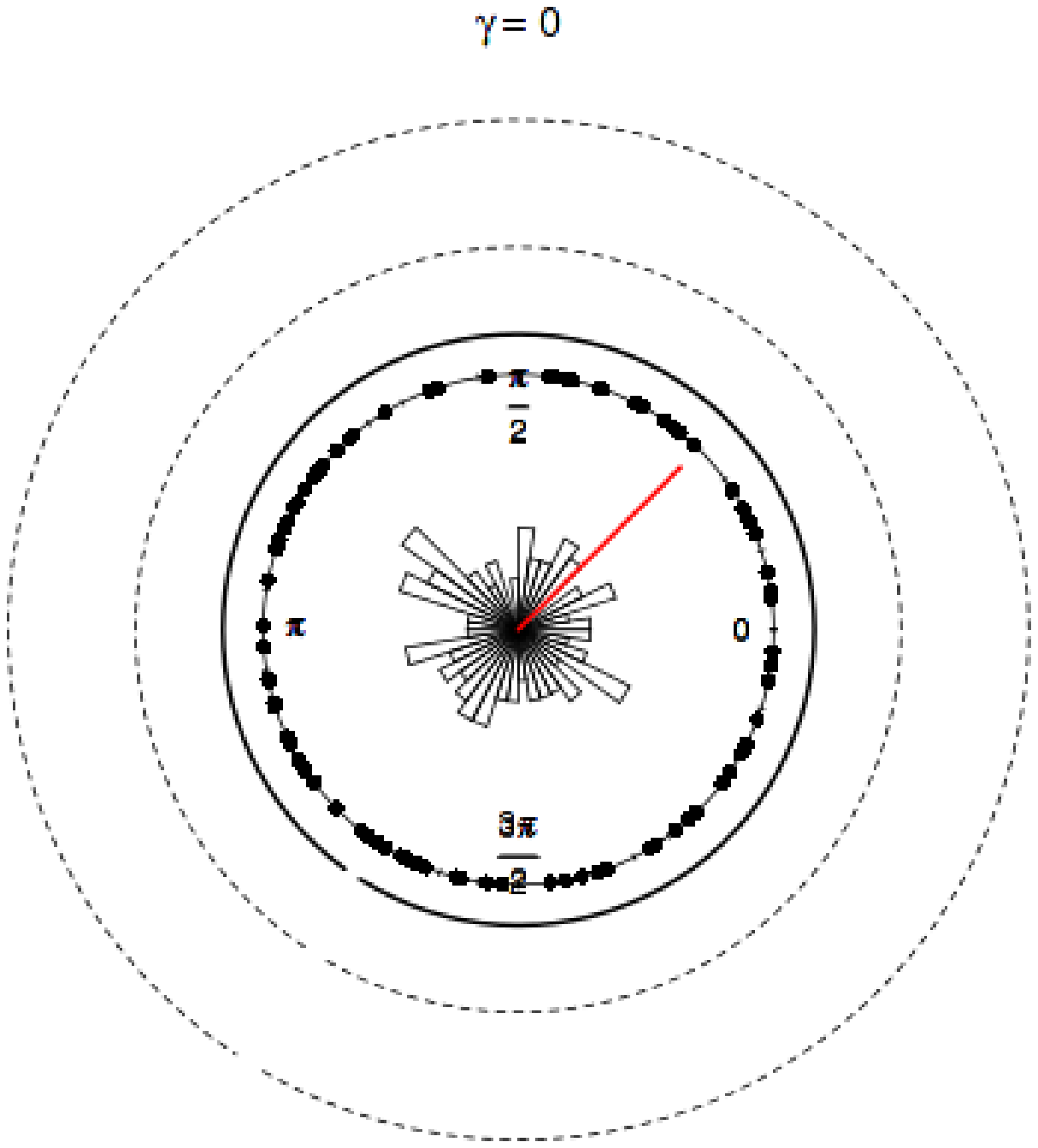} & \includegraphics[scale=.37]{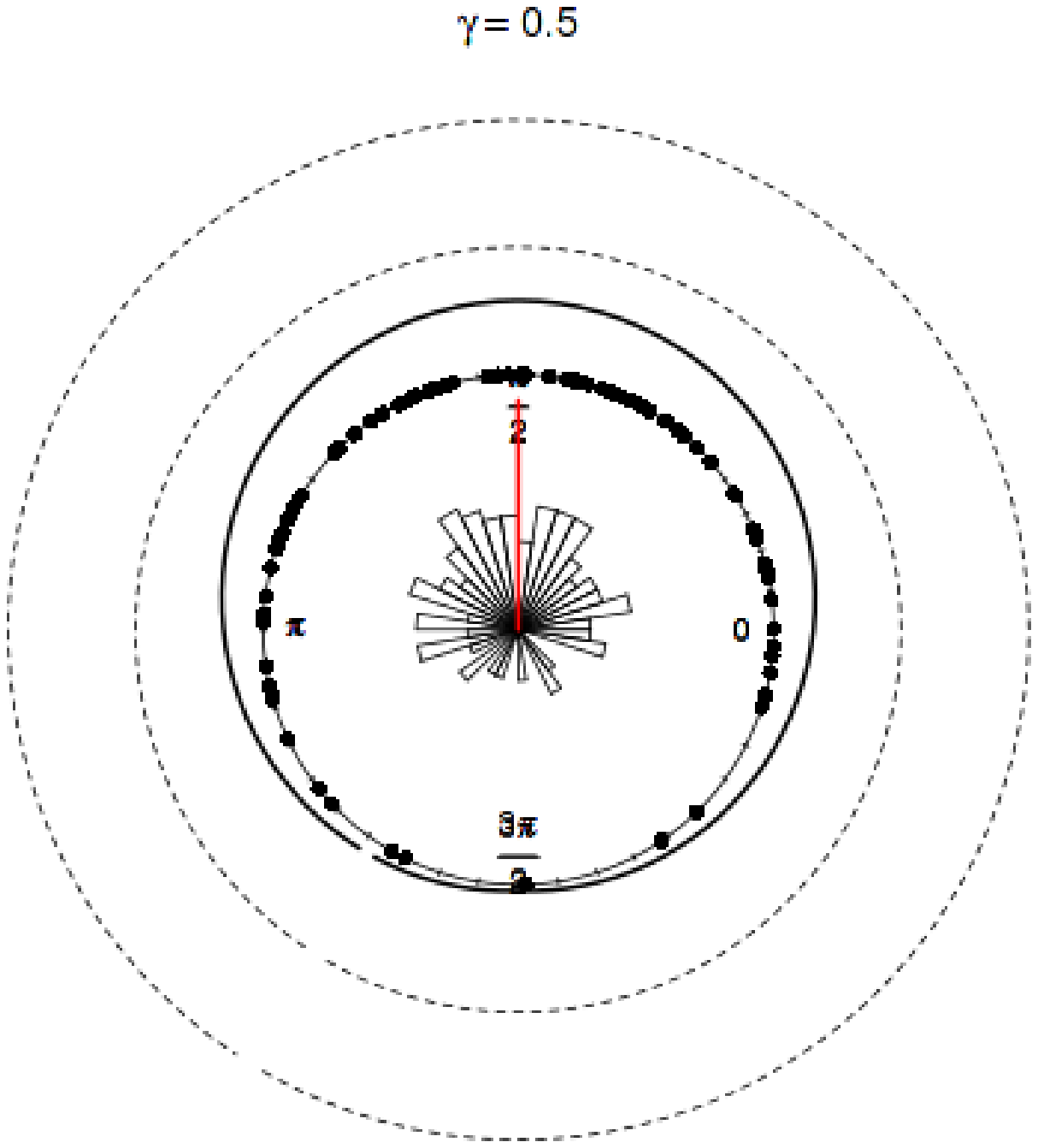} \\
\includegraphics[scale=.37]{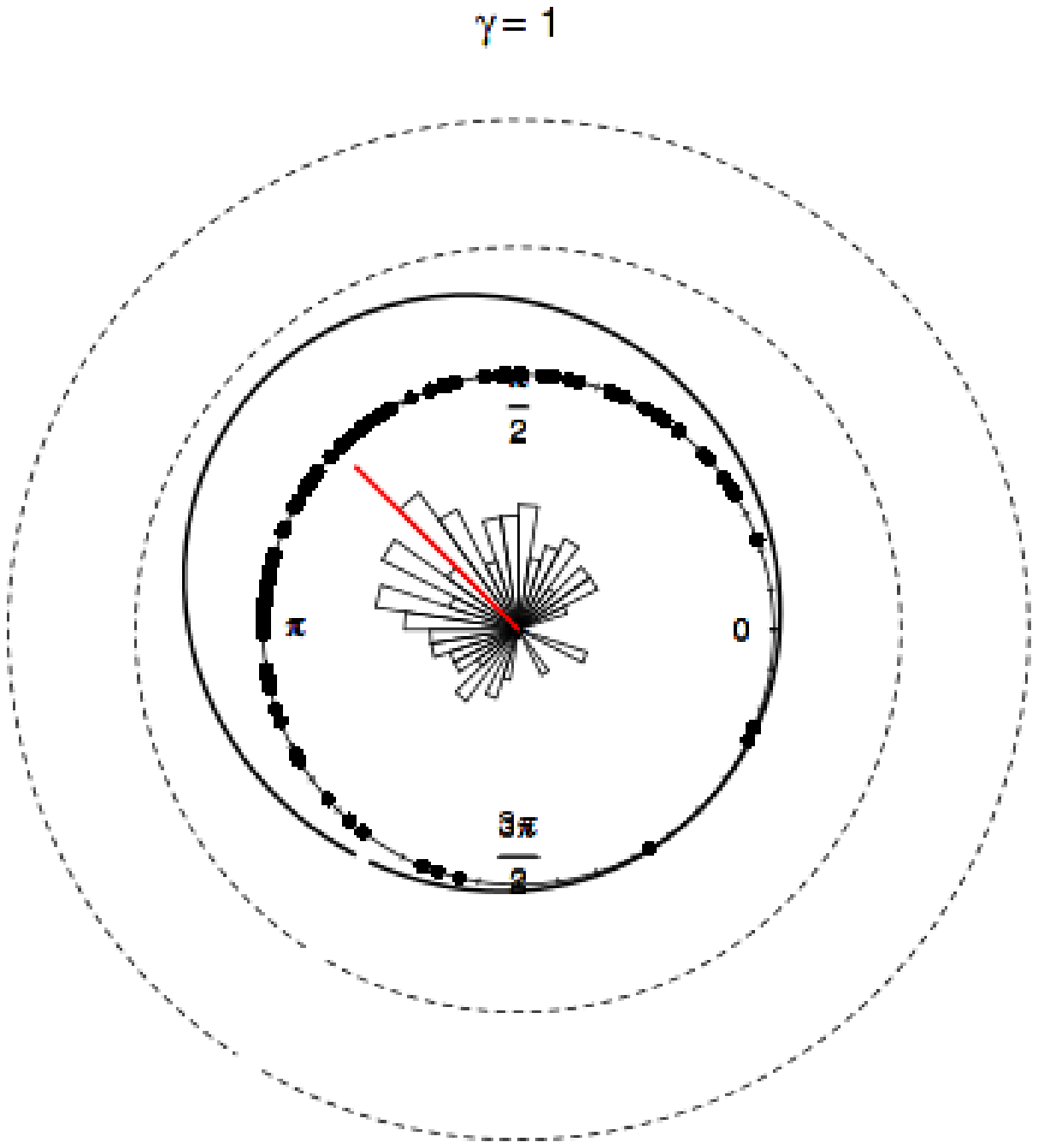} & \includegraphics[scale=.37]{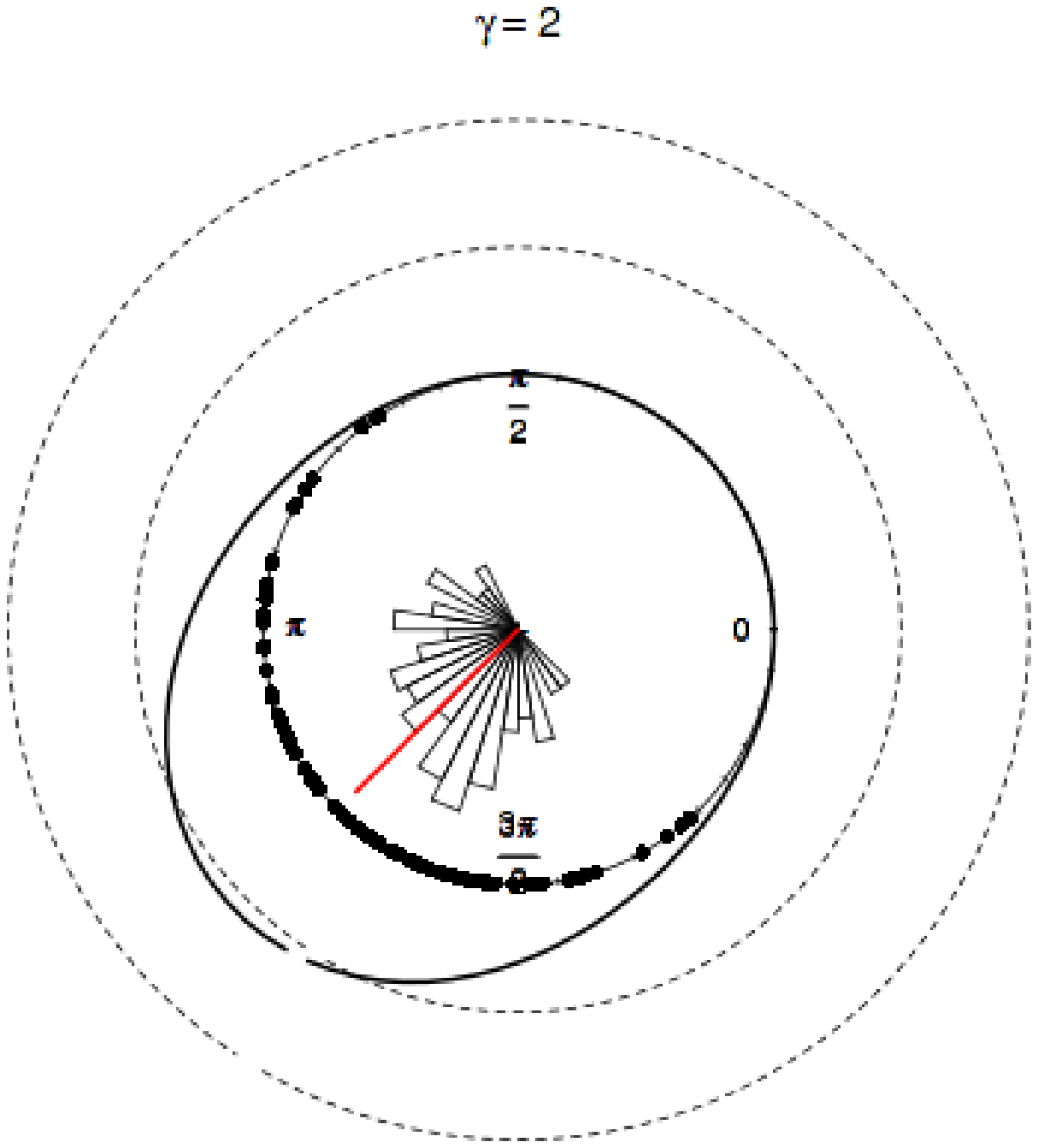}\\
\includegraphics[scale=.37]{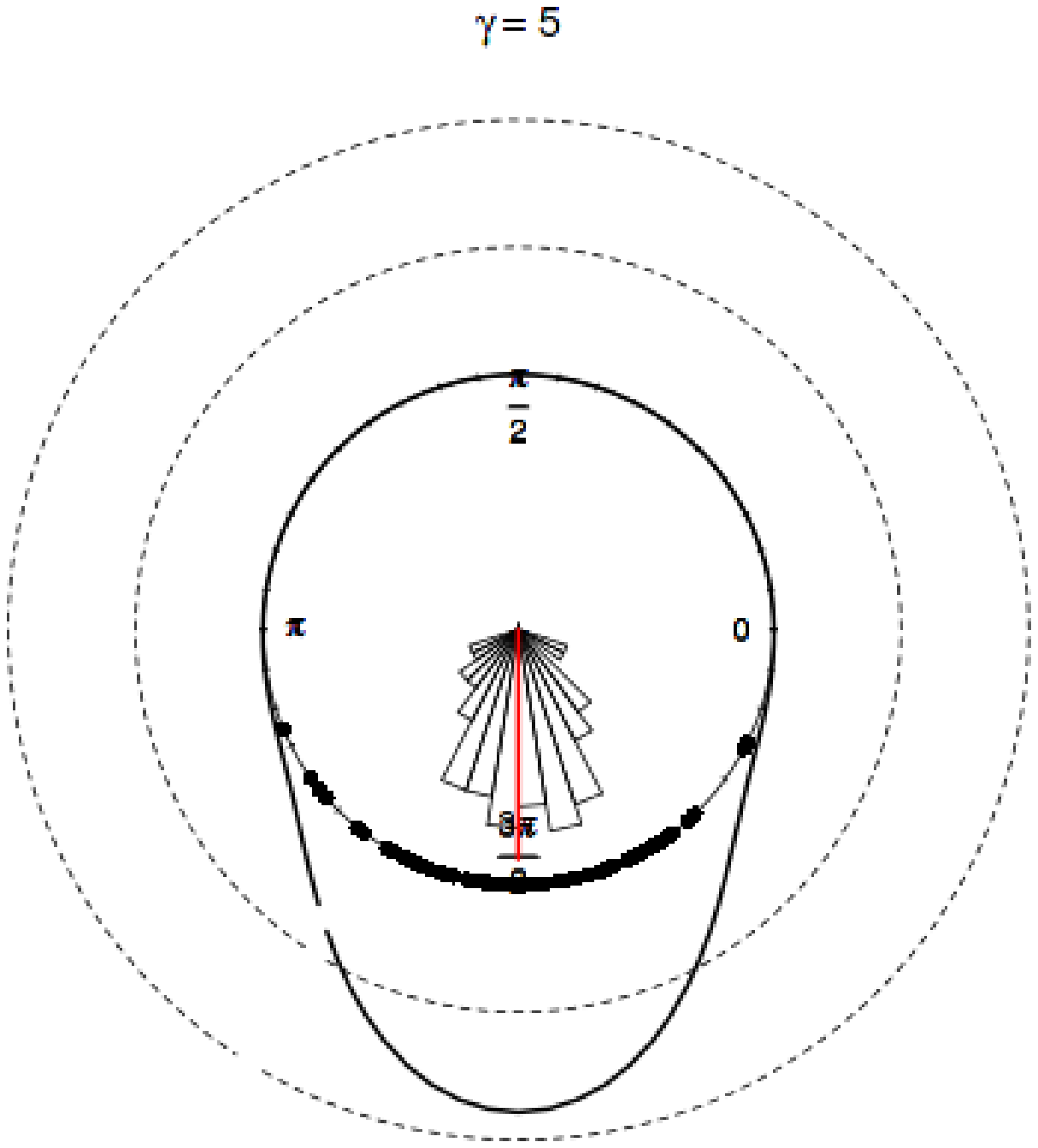} & \includegraphics[scale=.37]{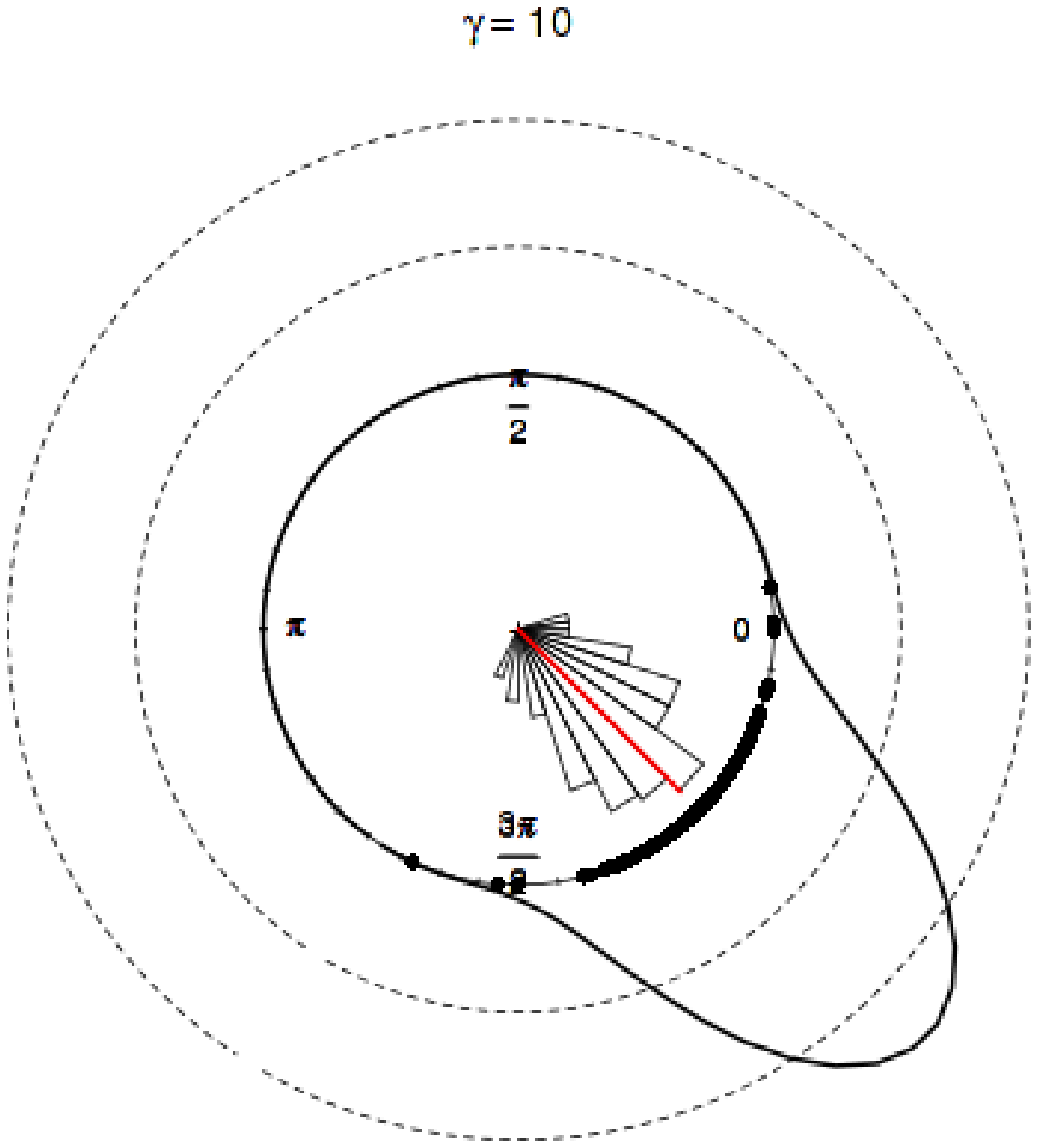}
\end{tabular}
\caption{Example of $n=100$ $gN(\mu,\gamma)$ realizations for different parameters. The red line indicates the value of $\mu$. These different plots have been produced using the \texttt{R} packages \texttt{circular, CircStats} maintained by C. Agostinelli, and related to the book \cite{B-JamSen01}.}\label{fig-exgN}
\end{center}

\end{figure}

\section{Maximum Likelihood Estimation}
\label{sec-mv}

\subsection{Preliminary and notation}

Let us consider now the identification problem of estimating the parameters of a $gN$ distribution from the $n$ observations $\theta_1,\ldots,\theta_n \in (0,2\pi)$. In this section, we will denote by $\mu^\star$ and $\gamma^\star$ the unknown parameters to estimate. We assume that $\mu^\star \in [0,2\pi)$ and $\gamma^\star>0$.
Also, we propose to denote by $\widehat{\mu}^I$ and $\widehat{\mu}^E$ the empirical intrinsic  and extrinsic means defined by
\begin{eqnarray} 
\widehat{\mu}^I &:=& \mbox{argmin}_{\mu \in \mathcal{S}^1} \frac1n \sum_{i=1}^n d_G(\mu,\theta_i)^2.  \label{def-muIest}\\
\widehat{\mu}^E &:=& \mbox{Arg}\left(\widehat{\varphi}_1 \right), \mbox{ with } \widehat{\varphi}_1:=\frac1n \sum_{j} \cos(\theta_j)+ i \frac{1}n \sum_j \sin(\theta_j). \label{def-muEest}
\end{eqnarray} 
Obtained through the minimization of an empirical function, $\widehat{\mu}^I$ is not necessarily reduced to a single element. The natural intrinsic and extrinsic variances are then denoted by $\widehat{\sigma}_I^2$ and $\widehat{\sigma}_E^2$ and uniquely given by
\begin{equation} \label{def-sigIEest}
\widehat{\sigma}_I^2 =  \frac1n \sum_{i=1}^n d_G(\widehat{\mu}^I,\theta_i)^2
 \quad \mbox{ and } \quad 
\widehat{\sigma}_E^2 =  1- |\widehat{\varphi}_1|.
\end{equation}
In the following, we will need the following Lemma and notation.
\begin{lemma} \label{lem-sig}
For a random variable $\theta_\gamma \sim gN(\mu^\star,\gamma)$, let $V(\mu,\gamma):= E[ d_G(\mu , \theta_\gamma)^2]$ for $\mu \in (\mu^\star-\pi, \mu^\star+\pi)$ and $\delta=\mu^\star-\mu$, then
$$
V(\mu,\gamma) = g(\delta) \mathbf{1}_{[0,\pi)}(\delta) +g(-\delta) \mathbf{1}_{(-\pi,0]}(\delta),
$$
where
$$g(\delta):=\int_{-\pi}^\pi (\delta+\alpha)^2 f(\alpha)d\alpha + 4\pi\int_{\pi-\delta}^\pi (2\pi - (\delta+\alpha))f(\alpha)d\alpha$$ 
and $f(\alpha)=k^{-1}(\gamma)e^{-\frac{\gamma}2\alpha^2}$.
\end{lemma}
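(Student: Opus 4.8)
The plan is to compute the expectation $V(\mu,\gamma)=E[d_G(\mu,\theta_\gamma)^2]$ directly by integrating against the density of $\theta_\gamma$, being careful about the fact that $d_G$ is the \emph{periodic} geodesic distance $\tilde d_G$ and that the reference point for the density is $\mu^\star$, not $\mu$. First I would write $\theta_\gamma = \mu^\star + \alpha$ where $\alpha = \overline{\mu^\star \theta_\gamma}$ has density $f(\alpha)=k^{-1}(\gamma)e^{-\frac{\gamma}{2}\alpha^2}$ supported on $(-\pi,\pi)$, as established earlier in the excerpt. Then $V(\mu,\gamma)=\int_{-\pi}^{\pi} d_G(\mu,\mu^\star+\alpha)^2 f(\alpha)\,d\alpha$, and the entire problem reduces to expressing $d_G(\mu,\mu^\star+\alpha)$ as a function of $\delta=\mu^\star-\mu$ and $\alpha$.

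The main obstacle, and the real content of the lemma, is the piecewise nature of the geodesic distance caused by wrap-around. The naive guess $d_G(\mu,\mu^\star+\alpha)=|\delta+\alpha|$ is valid only when $\delta+\alpha$ lies in $(-\pi,\pi)$; once $|\delta+\alpha|$ exceeds $\pi$ the periodic distance is $2\pi-|\delta+\alpha|$ instead. So I would fix $\delta\in[0,\pi)$ and split the range of $\alpha$: for $\alpha\in(-\pi,\pi-\delta)$ we have $\delta+\alpha\in(-\pi,\pi)$ and the integrand is $(\delta+\alpha)^2$, whereas for $\alpha\in(\pi-\delta,\pi)$ we have $\delta+\alpha\in(\pi,\pi+\delta)$ and the integrand becomes $(2\pi-(\delta+\alpha))^2$. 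Writing $\int_{-\pi}^{\pi-\delta}(\delta+\alpha)^2 f = \int_{-\pi}^{\pi}(\delta+\alpha)^2 f - \int_{\pi-\delta}^{\pi}(\delta+\alpha)^2 f$ and combining with the wrapped piece, the correction terms collapse: $(2\pi-(\delta+\alpha))^2-(\delta+\alpha)^2 = 4\pi\bigl(\pi-(\delta+\alpha)\bigr)=4\pi\bigl(2\pi-(\delta+\alpha)\bigr)-4\pi^2$. Matching this against the stated formula $g(\delta)=\int_{-\pi}^{\pi}(\delta+\alpha)^2 f\,d\alpha + 4\pi\int_{\pi-\delta}^{\pi}(2\pi-(\delta+\alpha))f\,d\alpha$ is the step I would check most carefully, since the precise constant inside the second integral is exactly where a wrap-around computation is easy to get wrong.

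Having obtained $V(\mu,\gamma)=g(\delta)$ for $\delta\in[0,\pi)$, I would handle $\delta\in(-\pi,0]$ by the reflection symmetry of the density. Since $f$ is even in $\alpha$, the substitution $\alpha\mapsto-\alpha$ sends the $\delta<0$ computation to the $-\delta>0$ one, giving $V(\mu,\gamma)=g(-\delta)$ on $(-\pi,0]$; this is precisely the symmetric role of the two indicator functions in the statement. The two cases agree at $\delta=0$ (where the second integral vanishes), so the indicator decomposition $g(\delta)\mathbf{1}_{[0,\pi)}(\delta)+g(-\delta)\mathbf{1}_{(-\pi,0]}(\delta)$ is consistent. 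The remaining work is entirely routine: there are no convergence issues because everything is integrated over a compact interval against a bounded density, so once the piecewise geometry of $d_G$ is pinned down, the identity follows by direct substitution and bookkeeping.
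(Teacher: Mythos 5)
Your argument is correct and is essentially identical to the paper's own proof: expand $\theta_\gamma$ around $\mu^\star$ so that $\alpha=\overline{\mu^\star\theta_\gamma}$ has density $f$ on $(-\pi,\pi)$, split the integral at $\alpha=\pi-\delta$ where the geodesic distance wraps, and reduce the case $\delta\in(-\pi,0]$ to the case $\delta\in[0,\pi)$ via the evenness of $f$.

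However, the discrepancy you flagged in your second paragraph is real, and you should trust your algebra over the printed statement. Since $(2\pi-u)^2-u^2=4\pi(\pi-u)$ with $u=\delta+\alpha$, the wrap-around correction is
$$
4\pi\int_{\pi-\delta}^{\pi}\bigl(\pi-(\delta+\alpha)\bigr)f(\alpha)\,d\alpha ,
$$
so the factor $2\pi-(\delta+\alpha)$ appearing in Lemma~\ref{lem-sig} (and in the last line of the paper's own proof --- the preceding line, with the squared terms, is correct, and the slip occurs in passing to the next line) should read $\pi-(\delta+\alpha)$; the two versions differ by $4\pi^2\int_{\pi-\delta}^{\pi}f(\alpha)\,d\alpha$, which is strictly positive for $\delta>0$. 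Three checks confirm that the corrected formula is the intended one. First, in the uniform limit $\gamma\to 0$ (where $f\to 1/(2\pi)$) one must have $V(\mu,\gamma)\to\pi^2/3$ for every $\mu$; the corrected $g$ gives $\pi^2/3$, while the printed one gives $\pi^2/3+2\pi\delta$. Second, the proof of Theorem~\ref{prop-mle}$(iii)$ uses $g''(\delta)=2-4\pi f(\pi-\delta)$ and hence $J_1(\gamma^\star)=\gamma^\star\bigl(1-2\pi k^{-1}(\gamma^\star)e^{-\gamma^\star\pi^2/2}\bigr)$; this follows from the corrected $g$, because the boundary term in Leibniz's rule vanishes (the integrand $\pi-(\delta+\alpha)$ is zero at $\alpha=\pi-\delta$), whereas the printed $g$ would produce the spurious extra term $-4\pi^2 f'(\pi-\delta)$ in $g''(\delta)$. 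Third, both versions coincide at $\delta=0$, consistently with $V(\gamma)=g(0)$ in~(\ref{def-s2Igamma}). So your proof, completed with the constant you actually derived, establishes the corrected statement; the lemma as printed contains a typo, not something your derivation needs to accommodate.
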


\begin{proof}
Let us fix $\mu\in (\mu^\star-\pi,\mu^\star]$, then
$$
\overline{\mu \alpha}= \left\{  \begin{array}{ll}
\overline{\mu\mu^\star}+ \overline{\mu^\star\alpha} & \mbox{when } \alpha \in (\mu^\star-\pi,\mu^\star+\pi-(\mu^\star-\mu)) \\
\overline{\mu\mu^\star}+ \overline{\mu^\star\alpha}-2\pi & \mbox{when } \alpha \in (\mu^\star+\pi-(\mu^\star-\mu),\mu^\star+\pi)
\end{array} \right.
$$
Denoting $\delta:=\mu^\star-\mu$, this expansion allows us to derive
\begin{eqnarray*}
V(\mu,\gamma)&=& E[d_G(\mu,\theta_\gamma)^2] \\
&=&  \int_{-\pi}^{\pi-\delta} (\delta+\alpha)^2 f(\alpha)d\alpha +\int_{\pi-\delta}^\pi (2\pi-\delta-\alpha)^2 f(\alpha)d\alpha \\
&=& \int_{-\pi}^\pi (\delta+\alpha)^2 f(\alpha)d\alpha + 4\pi\int_{\pi-\delta}^\pi (2\pi - (\delta+\alpha))f(\alpha)d\alpha.
\end{eqnarray*}
Now, let $\mu\in [\mu^\star,\mu^\star+\pi)$, then
$$
\overline{\mu \alpha}= \left\{  \begin{array}{ll}
\overline{\mu\mu^\star}+ \overline{\mu^\star\alpha} & \mbox{when } \alpha \in (\mu^\star-\pi+(\mu-\mu^\star),\mu^\star+\pi) \\
\overline{\mu\mu^\star}+ \overline{\mu^\star\alpha}+2\pi & \mbox{when } \alpha \in (\mu^\star-\pi,\mu^\star-\pi+(\mu-\mu^\star)),
\end{array} \right.
$$
which leads to
\begin{eqnarray*}
V(\mu,\gamma)&=&\int_{-\pi-\delta}^{\pi} (\delta+\alpha)^2 f(\alpha)d\alpha +\int_{-\pi}^{-\pi-\delta} (2\pi+\delta+\alpha)^2 f(\alpha)d\alpha \\
&=& \int_{-\pi}^\pi (\delta+\alpha)^2 f(\alpha)d\alpha + 4\pi\int_{-\pi}^{-\pi-\delta} (2\pi + (\delta+\alpha))f(\alpha)d\alpha \\
&=&\int_{-\pi}^\pi (\delta+\alpha)^2 f(\alpha)d\alpha + 4\pi\int_{\pi+\delta}^{\pi}(2\pi + (\delta-\alpha))f(\alpha)d\alpha \\
&=& g(-\delta).
\end{eqnarray*}
\end{proof}

Obviously $V(\mu^\star,\gamma)$ corresponds to the intrinsic variance of $\theta_\gamma$. Then, from Table~\ref{sum-mes}, this function does not depend on $\mu^\star$ and will therefore be simplified to $V(\gamma)$. As used in Theorem~\ref{prop-mle}, let us recall here the expression of the later quantity.
\begin{equation} \label{def-s2Igamma}
V(\gamma)=V(\mu^\star,\gamma):=\frac1\gamma\left( 1- 2\pi k^{-1}(\gamma) e^{-\frac{\gamma \pi^2}2} \right) 
\quad \mbox{ with } k(\gamma)=\sqrt{\frac{2\pi}\gamma} erf\left(\pi \sqrt{\frac\gamma 2} \right)
\end{equation}

\subsection{Maximum Likelihood Estimate}

The log-likelihood expressed for $n$ i.i.d. $gN$ distributions is given by~:
$$
\ell(\mu,\gamma) = - n \log(k(\gamma)) - \frac\gamma 2 \sum_{i=1}^n d_G(\mu , \theta_i )^2.
$$
Let $\Omega=\left\{ (\mu,\gamma):\mu \in [0,2\pi)\setminus \{ (\mu^\star \pm \pi) \; mod(2\pi)\}, \gamma>0\right\}$ and assume that the true parameter $(\mu^\star,\gamma^\star)$ belongs to the interior of $\Omega$. The MLE estimates and asymptotic results are given by the following result. 

\begin{theorem}\label{prop-mle}${ }$\\
$(i)$ The MLE estimate of $\mu^\star$ corresponds to the intrinsic sample mean set, that is $\widehat{\mu}^{MLE}:=\widehat{\mu}^I$. The MLE estimate of $\gamma^\star$ is uniquely given by $\widehat{\gamma}^{MLE}=V^{-1}(\widehat{\sigma}_I^2)$, where $V(\cdot)$ is the function defined by (\ref{def-s2Igamma}) and where $\widehat{\sigma}_I^2$ is the intrinic sample variance.\\
$(ii)$ As $n\to +\infty$, $(\widehat{\mu}^{MLE},\widehat{\gamma}^{MLE})$ is a strongly consistent estimate of $(\mu^\star,\gamma^\star)$.\\
$(iii)$ As $n\to +\infty$, the MLE estimates satisfy the following central limit theorem
$$
\sqrt{n} \left( \widehat{\mu}^{MLE}-\mu^\star, \widehat{\gamma}^{MLE}-\gamma^{\star} \right)^T \stackrel{d}{\to} \mathcal{N}\left(0,J^{-1}(\gamma^\star)\right),$$
where $J(\gamma^\star)$ is the Fisher information matrix given by $J(\gamma^\star)=  
\left(\begin{array}{ll} J_1(\gamma^\star) & 0 \\ 0 & J_2(\gamma^\star) \end{array}
\right)$
with $J_1(\gamma^\star) := \gamma^\star \left(1 - 2\pi k^{-1}(\gamma^\star)e^{-\frac{\gamma^\star}2 \pi^2} \right)$ and 
$J_2(\gamma^\star) := \frac{k^{\prime\prime}(\gamma^\star)}{k(\gamma^\star)}- \left(\frac{k^\prime(\gamma^\star)}{k(\gamma^\star)}\right)^2. $
\end{theorem}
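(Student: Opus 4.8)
The plan is to establish part $(i)$ by profiling the log-likelihood. Note that $\mu$ enters $\ell(\mu,\gamma)$ only through the term $-\frac{\gamma}{2}\sum_i d_G(\mu,\theta_i)^2$; since $\gamma>0$, maximizing over $\mu$ is equivalent to minimizing $\sum_i d_G(\mu,\theta_i)^2$, which is exactly definition (\ref{def-muIest}) of $\widehat\mu^I$, giving $\widehat\mu^{MLE}=\widehat\mu^I$. For $\gamma$ I would substitute $\mu=\widehat\mu^I$ and differentiate, using the identity $-k'(\gamma)/k(\gamma)=\tfrac12 V(\gamma)$, which follows by differentiating $k(\gamma)=\int_{-\pi}^\pi e^{-\gamma\theta^2/2}d\theta$ under the integral sign and recognizing $V(\gamma)=k^{-1}(\gamma)\int_{-\pi}^\pi\alpha^2 e^{-\gamma\alpha^2/2}d\alpha$. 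The score equation $\partial_\gamma\ell=0$ then reads $-k'/k-\tfrac12\widehat\sigma_I^2=0$, i.e. $V(\gamma)=\widehat\sigma_I^2$, so $\widehat\gamma^{MLE}=V^{-1}(\widehat\sigma_I^2)$. Uniqueness follows because $\partial_\gamma^2\ell=-n\big(k''(\gamma)/k(\gamma)-(k'(\gamma)/k(\gamma))^2\big)$, the bracket equalling $\tfrac14\mathrm{Var}(\overline{\mu\theta}^2)>0$; equivalently $V$ is strictly decreasing, hence invertible on its range.

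For part $(ii)$ I would argue consistency in two stages. By Lemma~\ref{lem-sig} the map $\mu\mapsto V(\mu,\gamma^\star)$ has $\mu^\star$ as unique minimizer (uniqueness of the intrinsic mean being guaranteed by Theorem~1 of \cite{A-KazSri08}, as already invoked). Since $d_G(\cdot,\theta)^2$ is continuous and bounded by $\pi^2$ on the compact circle, a uniform strong law gives $\sup_\mu|\frac1n\sum_i d_G(\mu,\theta_i)^2-V(\mu,\gamma^\star)|\to0$ almost surely, and the standard argmin argument of M-estimation forces $\widehat\mu^I\to\mu^\star$ a.s. Consequently $\widehat\sigma_I^2=\frac1n\sum_i d_G(\widehat\mu^I,\theta_i)^2\to V(\gamma^\star)$ a.s., and since $V^{-1}$ is continuous, $\widehat\gamma^{MLE}=V^{-1}(\widehat\sigma_I^2)\to\gamma^\star$ a.s.

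Part $(iii)$ is where the real work lies. I would write the per-observation score as $\partial_\mu\log f=\gamma\,\overline{\mu\theta}$ and $\partial_\gamma\log f=-k'/k-\tfrac12\overline{\mu\theta}^2$. Diagonality of $J$ is immediate from the symmetry of $\overline{\mu^\star\theta}$ about $0$ (recall (\ref{eq-trunc})): the off-diagonal entry is a combination of $E[\overline{\mu^\star\theta}]$ and $E[\overline{\mu^\star\theta}^3]$, both vanishing. The limit law of $\widehat\mu^I$ I would obtain from Z-estimator theory applied to $\frac1n\sum_i\overline{\mu\theta_i}=0$, and that of $\widehat\gamma^{MLE}$ from the delta method applied to $V^{-1}(\widehat\sigma_I^2)$; because $\widehat\mu^I$ minimizes the empirical Fréchet function, the first-order effect of $\widehat\mu^I-\mu^\star$ on $\widehat\sigma_I^2$ vanishes by an envelope argument, so $\sqrt n(\widehat\sigma_I^2-V(\gamma^\star))=\frac1{\sqrt n}\sum_i(\overline{\mu^\star\theta_i}^2-V(\gamma^\star))+o_P(1)$. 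The joint CLT for the pair $\big(\frac1{\sqrt n}\sum_i\overline{\mu^\star\theta_i},\ \frac1{\sqrt n}\sum_i(\overline{\mu^\star\theta_i}^2-V)\big)$ has diagonal covariance (again via $E[\overline{\mu^\star\theta}^3]=0$), and combining the two linearizations yields the stated bivariate normal limit.

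The main obstacle, and the point needing genuine care, is that $d_G(\mu,\theta)^2$ is not differentiable in $\mu$ at the cut locus $\mu=\theta\pm\pi$, so the $\mu$-score is discontinuous and the classical information identity must be verified rather than assumed. Concretely, $\overline{\mu\theta}$ jumps by $2\pi$ as $\mu$ crosses $\theta+\pi$, so differentiating the population estimating function $\mu\mapsto\gamma\,E_{\mu^\star}[\overline{\mu\theta}]$ at $\mu^\star$ produces, besides the naive slope $-\gamma$, an extra boundary term $+2\pi\gamma\,k^{-1}(\gamma)e^{-\gamma\pi^2/2}$ arising from the density mass at the antipode $\mu^\star\pm\pi$. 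This is exactly what yields $J_1=\gamma\big(1-2\pi k^{-1}(\gamma)e^{-\gamma\pi^2/2}\big)=\gamma^2 V(\gamma)$, reconciles the Hessian and score-square computations, and explains why the asymptotic variance of $\widehat\mu^I$ is $J_1^{-1}$ rather than the value one would naively read off the slope. For $\gamma$ no such difficulty arises, and a direct check that $V'(\gamma^\star)=-2J_2(\gamma^\star)$ together with $\mathrm{Var}(\overline{\mu^\star\theta}^2)=4J_2(\gamma^\star)$ confirms that the delta-method variance of $\widehat\gamma^{MLE}$ equals $J_2^{-1}$. I expect the cleanest rigorous route for the $\mu$-part is to invoke the general central limit theorem for intrinsic sample means on manifolds \cite{A-BhaPat05}, which already packages this cut-locus contribution.
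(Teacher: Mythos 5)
Your proposal is correct. For parts $(i)$ and $(ii)$ it is essentially the paper's own argument: the same profiling of $\ell$, the same identity $k^\prime(\gamma)=-\frac{k(\gamma)}{2}V(\gamma)$, and uniqueness of $\widehat{\gamma}^{MLE}$ via $V^\prime(\gamma)=-\frac12 Var[d_G(\mu^\star,\theta_\gamma)^2]<0$; for consistency the paper cites Theorem~2.3 of \cite{A-BhaPat03} and bounds $|\widehat{\sigma}_I^2-\frac1n\sum_i d_G(\mu^\star,\theta_i)^2|$ by the Lipschitz estimate $4\pi d_G(\widehat{\mu}^{MLE},\mu^\star)$, where you instead use a uniform strong law plus the standard argmin argument --- an interchangeable variation. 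Part $(iii)$ is where you genuinely depart, and your route is more careful than the paper's. The paper appeals to the classical smooth-model MLE theorem (Theorem 5.1, p.~463 of \cite{B-LehCas98}), explicitly omits checking its regularity assumptions, and computes $J_1$ by writing $\frac{\gamma}{2}E\left[\partial^2_\mu d_G(\mu,\theta_\gamma)^2\right]=\frac{\gamma^\star}{2}\frac{\partial^2 V}{\partial\mu^2}(\mu^\star,\gamma^\star)$, the latter evaluated via $g^{\prime\prime}(0)=2-4\pi f(\pi)$ from Lemma~\ref{lem-sig}. As you observe, that interchange is false pointwise: $E[\partial^2_\mu d_G^2]=2$ for a.e.\ $\theta$, whereas $\partial^2_\mu E[d_G^2]=2-4\pi f(\pi)$, precisely because the score $\gamma\,\overline{\mu\theta}$ jumps by $2\pi\gamma$ at the cut locus; for the same reason the naive information identity would give $\gamma$ rather than $J_1=\gamma^2 V(\gamma)$, so the hypotheses of the cited theorem do not literally hold. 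Your sandwich/Z-estimator argument --- score variance $\gamma^2 V(\gamma^\star)=J_1$ and population-score slope $-J_1$ (boundary term included), hence asymptotic variance $J_1^{-1}$ --- together with the delta method for $\widehat{\gamma}^{MLE}$ (using $V^\prime(\gamma^\star)=-2J_2(\gamma^\star)$ and $Var(\overline{\mu^\star\theta}^2)=4J_2(\gamma^\star)$, both of which check out against the paper's equation (\ref{dg4})) and cross-moments vanishing by symmetry, supplies exactly the justification the paper's citation glosses over; invoking the intrinsic-mean CLT of \cite{A-BhaPat05}, as you suggest, is an equally valid way to package the cut-locus contribution. What the paper's route buys is brevity and a standard reference; what yours buys is a derivation of the $\mathcal{N}(0,J^{-1}(\gamma^\star))$ limit that actually survives the non-differentiability of $d_G(\mu,\theta)^2$ at $\mu=\theta\pm\pi$.
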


We emphasize that we do not derive analytic formulas for $\widehat{\gamma}^{MLE}$ but we prove its uniqueness by proving that the function $V(\cdot)$ is a strictly decreasing function (which is illustrated by Figure~\ref{fig-conc}). From a practical point of view, the computation of $\widehat{\gamma}^{MLE}$ (as well as the one of $\widehat{\mu}^{MLE}$) has been derived using a simple optimization algorithm.

As Mardia and Jupp did for the von~Mises distribution (\cite{B-MarJup00}, Section 5.3 p. 86), $\widehat{\mu}^{MLE}$ is regarded as unwrapped onto the line for the asymptotic normality result.

As it is for a Gaussian distribution on the real line or for a $vM$ distribution on the circle, the Fisher information matrix of a $gN$ distribution does not depend on the true location parameter $\mu^\star$ and the two estimates of $\mu^\star$ and $\gamma^\star$ are asymptotically independent. Let us also note that the  geodesic moment estimates, that is the estimates of $\mu^\star$ and $\gamma^\star$ based on the first two geodesic moments equations $\mu^I$ and $V(\gamma)$ exactly fit to the maximum likelihood estimates. Here is again another analogy with the $vM$ distribution, since the MLE of a $vM$ distribution correspond to the estimates of $\mu$ and $\kappa$ based on the extrinsic moments (see \cite{B-MarJup00} for further details).

\begin{proof}
$(i)$ Since the minimum of $\sum_{i=1}^n d_G(\mu , \theta_i )^2$ defines the intrinsic sample mean set, the MLE of $\mu$ correponds to the intrinsic sample mean set of $\mu^\star$. Now, the partial derivative of $\ell$ with respect to $\gamma$ is  given by
$$
\frac{\partial \ell}{\partial \gamma } (\mu,\gamma) = - \frac{k^\prime(\gamma)}{k(\gamma)} - \frac12 \sum_{i=1}^n d_G(\mu , \theta_i )^2.
$$
Let us note that 
\begin{equation}\label{kprime}
{k^\prime(\gamma)} =-\frac12 \int_{-\pi}^\pi \theta^2 e^{-\frac{\gamma}2 \theta^2} d\theta =-\frac{k(\gamma) }2 E[ d_G(\mu^\star,\theta_\gamma)^2] = - \frac{k(\gamma) }2 V(\gamma).
\end{equation}
Replacing $\mu$ by its MLE estimate and taking the derivative of $\ell$ w.r.t. $\gamma$ equal to zero implies that the MLE estimate of $\gamma$ is defined by the following equation:
$$
V(\widehat{\gamma}^{MLE}) = \frac1n \sum_{i=1}^n d_G(\widehat{\mu}^{MLE},\theta_i)^2 =: \widehat{\sigma}_I^2.
$$
The proof is ended by showing that $V(\cdot)$ is a striclty decreasing function on $\R^+$. Similarly to~(\ref{kprime}), we notice that $k^{\prime\prime}(\gamma)= \frac{k(\gamma)}4 E[d_G(\mu^\star,\theta_\gamma)^4]$. Now,
\begin{eqnarray}
V^\prime(\gamma) &=& -2 \left( \frac{k^{\prime\prime}(\gamma)}{k(\gamma)} - \left(\frac{k^\prime(\gamma)}{k(\gamma)}\right)^2 \right) \nonumber \\
&=& -2 \left( \frac14 E[ d_G(\mu^\star,\theta_\gamma)^4] - E[d_G(\mu^\star,\theta_\gamma)^2/2]^2 \right) \nonumber\\
&=& -\frac12 Var[ d_G(\mu^\star,\theta_\gamma)^2 ] <0. \label{dg4}
\end{eqnarray}
$(ii)$ As $\mu^\star$ corresponds to the unique intrinsic mean for a $gN$ distribution, the strong consistency of the intrinsic sample mean set is derived from {\it e.g.} Theorem~2.3 of \cite{A-BhaPat03}. Now, for the consistency of $\widehat{\gamma}^{MLE}$, let us introduce the variable $M_n:=n^{-1} \sum_{i=1}^n d_G(\mu^\star,\theta_i)^2$. From the LLN, $M_n$ converges almost surely towards $E[d_G(\mu^\star,\theta_i)^2]=V(\gamma^\star)$. Moreover, we may prove that $M_n-\widehat{\sigma}_I^2$ tends to zero (almost surely). The later following from
\begin{eqnarray*}
|M_n-\widehat{\sigma}_I^2| &=& \left| \frac1n \sum_{i=1}^n (d_G(\widehat{\mu}^{MLE},\theta_i)^2 - d_G(\mu^\star,\theta_i)^2 ) \right| \\
&\leq& \frac{4\pi}n \sum_{i=1}^n \left| d_G(\theta_i,\widehat{\mu}^{MLE}) - d_G(\theta_i,\mu^\star) \right| \\
&\leq&  \frac{4\pi}n \sum_{i=1}^n d_G(\widehat{\mu}^{MLE},\mu^\star) = 4\pi d_G(\widehat{\mu}^{MLE},\mu^\star).
\end{eqnarray*}
Combining the previous convergences leads to the almost sure convergence of $\widehat{\sigma}_I^2$ to $V(\gamma^\star)$ and to the result since $V(\cdot)$ is continuous and invertible on $\mathbb{R}_+^*$.

\noindent $(iii)$ Standard theory of maximumn likelihood estimators (Theorem 5.1 p. 463 of \cite{B-LehCas98}) shows that asymptotic normality result holds. The verification of the assumptions (A-D) of \cite{B-LehCas98}, p.462-463 are omitted; we just focus, here, on the computation of the Fisher information matrix. The antidiagonal term is given by
$$
J_{12}:= \frac12 E\left[ \frac{\partial}{\partial \mu} d_G(\mu,\theta_\gamma)^2 \right] \Bigg|_{\mu=\mu^\star,\gamma=\gamma^\star} = \frac12 \frac{\partial V}{\partial \mu}(\mu^\star,\gamma^\star)=0,
$$
since $\mu^\star$ corresponds to the intrinsic mean and thus minimizes the geodesic variance. The asymptotic variance of $\sqrt{n}\widehat{\gamma}^{MLE}$ is  given by the inverse of
$$
J_2(\gamma^\star)= \frac{k^{\prime\prime}(\gamma^\star)k(\gamma^\star)-k^\prime(\gamma^\star)^2 }{k(\gamma^\star)^2}.
$$
Recall that from (\ref{dg4}), this constant is positive. Now, the last term to compute is the asymptotic variance of $\sqrt{n}\widehat{\mu}^{MLE}$ given by the inverse of
$$
J_1(\gamma^\star):= \frac{\gamma}2 E\left[ \frac{\partial^2}{\partial\mu^2} d_G(\mu,\theta_\gamma)^2\right] \Bigg|_{\mu=\mu^\star,\gamma=\gamma^\star}
 = \frac{\gamma^\star}2 \frac{\partial^2 V}{\partial \mu^2}(\mu^\star,\gamma^\star).$$
From Lemma~\ref{lem-sig}, $\sigma_I^2(\mu,\gamma)$ is a function of $\delta=\mu^\star-\mu$. Without loss of generality, assume $\delta \geq 0$ (the other case leads to the same conclusion), then the function $g$ (in Lemma~\ref{lem-sig}) is twice continuous differentiable on $[0,\pi)$ and $g^{\prime \prime}(\delta) = 2 -4\pi f(\pi-\delta)$. Setting $\delta=0$ in the last equation leads to the stated result. 
\end{proof}

%\subsection{Trigonometric Moments estimate}

\subsection{Simulation study}

We have investigated the efficiency of the maximum likelihood estimates in a simulation study. A part of the results are presented in Table~\ref{tab-sim}. As expected, the empirical MSE of both estimates of the parameters $\mu^\star$ and $\gamma^\star$ converge towards zero as the sample size grows. We also notice that it's more complicated to estimate the intrinsic mean when the concentration parameter is low. Unlike this, the concentration parameter is better estimated for low values of $\gamma^\star$. These facts are confirmed by Figure~\ref{fig-vMLE} which shows the constants of the asymptotic variances (for both estimates), i.e. $1/J_1(\gamma^\star)$ and $1/J_2(\gamma^\star)$,  in terms of $\gamma^\star$. Figure~\ref{fig-tcl} illustrates the central limit theorem satisfied by the MLE estimates.

\begin{table}[H]
\begin{center}
\begin{tabular}{l|ccccc}
\hline
\multicolumn{6}{c}{Location parameter $\mu^\star$} \\
\hline
Simulation& \multicolumn{5}{c}{Sample size} \\
Parameters &  $n=10$ & $n=20$ & $n=50$ & $n=100$ & $n=500$ \\
  \hline
$\mu^\star=\frac{\pi}4, \gamma^\star=.5$ & 3.3668 & 1.6515 & 0.2218 & 0.0313 & 0.0048 \\ 
 $\mu^\star=\frac{3\pi}4, \gamma^\star=1$   & 0.1088 & 0.0537 & 0.0206 & 0.0103 & 0.0020 \\ 
$\mu^\star=\frac{5\pi}4, \gamma^\star=5$    &  0.0199 & 0.0102 & 0.0040 & 0.0019 & 0.0004\\ 
 $\mu^\star=\frac{7\pi}4, \gamma^\star=10$   & 0.0099 & 0.0050 & 0.0020 & 0.0010 & 0.0002 \\ 
   \hline
\end{tabular}
\vspace*{1cm}

\begin{tabular}{l|ccccc}
\hline
\multicolumn{6}{c}{Concentration parameter $\gamma^\star$} \\
\hline
Simulation& \multicolumn{5}{c}{Sample size} \\
Parameters & $n=10$ & $n=20$ & $n=50$ & $n=100$ & $n=500$ \\ 
  \hline
$\mu^\star=\frac{\pi}4, \gamma^\star=.5$ & 0.2893 & 0.0748 & 0.0255 & 0.0141 & 0.0065  \\ 
  $\mu^\star=\frac{3\pi}4, \gamma^\star=1$ &  1.0559 & 0.2268 & 0.0565 & 0.0252 & 0.0046\\ 
 $\mu^\star=\frac{5\pi}4, \gamma^\star=5$   & 22.8649 & 5.4092 & 1.3365 & 0.5757 & 0.1037 \\ 
 $\mu^\star=\frac{7\pi}4, \gamma^\star=10$   &  88.5093 & 20.1554 & 5.3607 & 2.2723 & 0.4179\\ 
   \hline
\end{tabular}
\end{center}
\caption{Empirical Mean Squared Error (MSE) of MLE estimates of the location parameter $\mu^\star$ (top) and the concentration parameter $\gamma^\star$ (bottom) based on $m=5000$ replications of $gN$ distributions for differents choices of parameters and different sample sizes.}\label{tab-sim}
\end{table}

\bibliographystyle{alpha}

\bibliography{circle}

\begin{figure}[H]
\begin{center}
\begin{tabular}{cc}
\includegraphics[scale=.4]{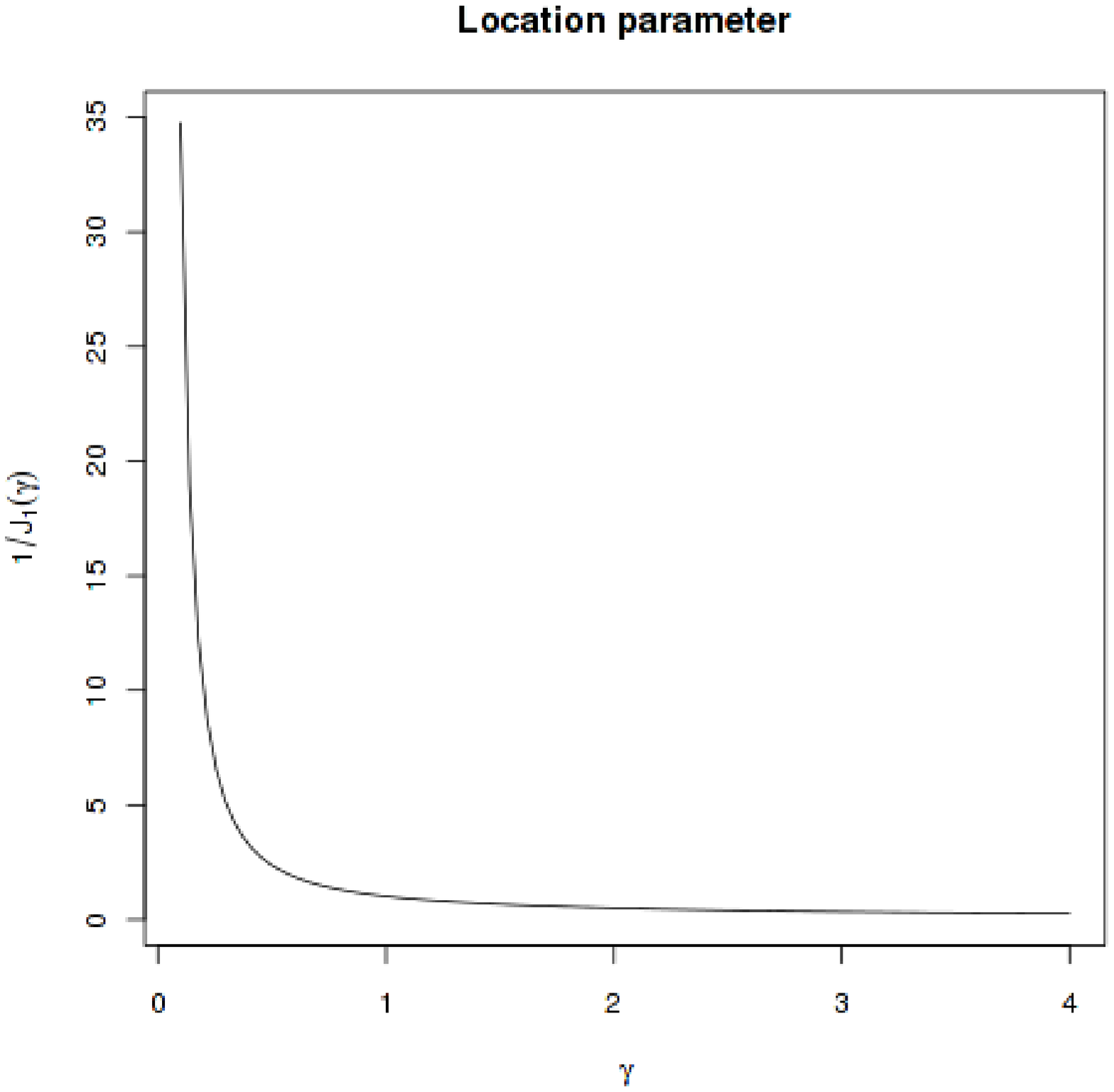} & \includegraphics[scale=.4]{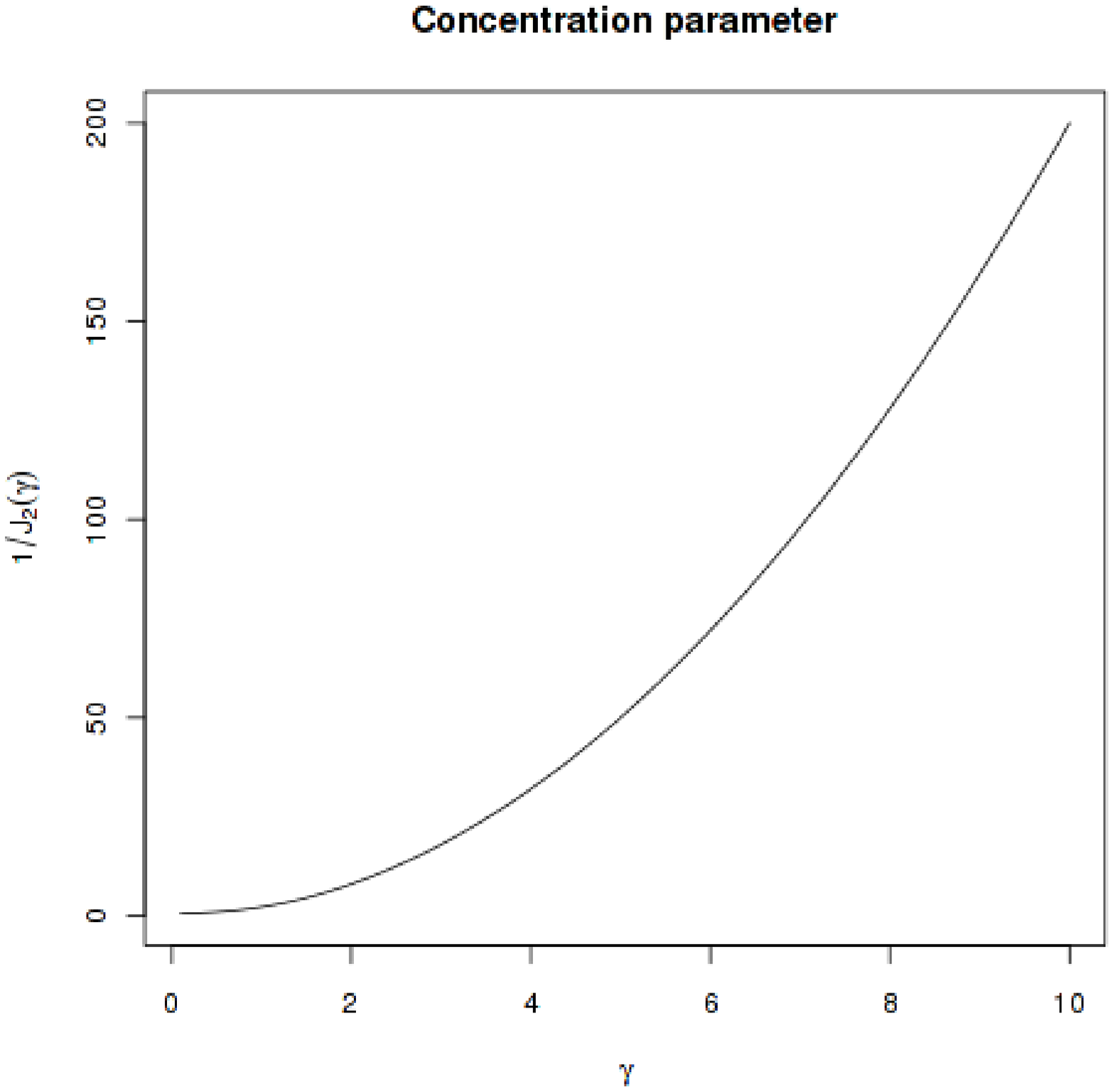}
\end{tabular}
\caption{Plots of the constants of the asymptotic variances of $\widehat{\mu}^{MLE}$ (left) and $\widehat{\gamma}^{MLE}$ (right), i.e. the constants $1/J_1(\gamma^\star)$ and $1/J_2(\gamma^\star)$ given in Proposition~\ref{prop-mle}, in terms of $\gamma^\star$.}\label{fig-vMLE}
\end{center}
\end{figure}

\begin{figure}[H]
\begin{center}
\includegraphics[scale=.8]{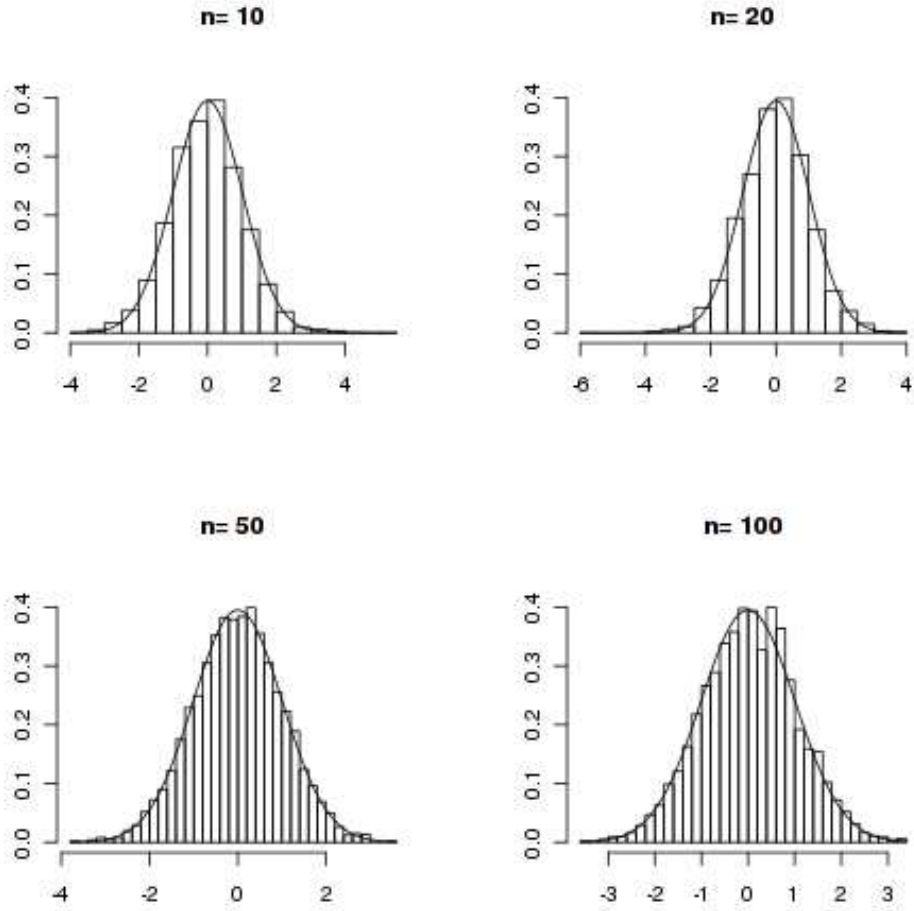}
\caption{Histograms of $\left( \sqrt{n}(\widehat{\mu}^{MLE}_j-\mu^\star)\right)_{j=1,\ldots,5000}$ based on $m=5000$ replications of a $gN(3\pi/4,1)$ distribution for different sample sizes. The curve corresponds to the density of a Gaussian random variable with mean zero and variance $1/J_1(1)$ given in Proposition~\ref{prop-mle}.}\label{fig-tcl}
\end{center}
\end{figure}

\end{document}